\theoremstyle{plain}
\newtheorem{theorem}{Theorem}[section]
\newtheorem{proposition}[theorem]{Proposition}
\newtheorem{definition}[theorem]{Definition}
\newtheorem{lemma}[theorem]{Lemma}
\newtheorem{corollary}[theorem]{Corollary}
\newtheorem{remark}[theorem]{Remark}
\newcommand{\A}{\mathbb A}
\newcommand{\Af}{\A_f}
\newcommand{\dAf}{d_{\mathbb{A}_f}}
\newcommand{\C}{\mathbb C}
\newcommand{\Co}{\mathrm C}
\newcommand{\D}{\mathcal D}
\newcommand{\Da}{D^{\alpha}}
\newcommand{\Db}{D^{\beta}}
\newcommand{\Dab}{D^{\alpha,\beta}}
\newcommand{\ma}{m^{\alpha}}
\newcommand{\mb}{m^{\beta}}
\newcommand{\mab}{m^{\alpha,\beta}}
\newcommand{\Dom}{\mathrm {Dom}}
\newcommand{\B}{\mathcal B}
\newcommand{\F}{\mathcal F}
\newcommand{\N}{\mathbb N}
\newcommand{\Q}{\mathbb Q}
\newcommand{\R}{\mathbb R}
\newcommand{\Z}{\mathbb Z}
\newcommand{\Zz}{\widehat{\Z}}
\newcommand{\abs}[1]{\left\vert#1\right\vert}
\newcommand{\norm}[1]{\left\Vert#1\right\Vert}
\newcommand{\ord}{\mathrm{ord}}
\newcommand{\To}{\longrightarrow}
\date{\today}
\begin{document}

\title[Invariant Parabolic   equations and Markov process Ad\`eles ]{Invariant Parabolic   equations and Markov process on  Ad\`eles 
}
\author[V.A. Aguilar--Arteaga and S. Estala--Arias]{ Victor A. Aguilar--Arteaga$^*$   and Samuel Estala--Arias$^{**}$}

\address{$^{*}$ Departamento de Matem\'aticas, CINVESTAV, Unidad Quer\'etaro, M\'exico} 
\email{aguilarav@math.cinvestav.mx}

\address{$^{**}$ Instituto de Matem\'aticas, UNAM.} 
\email{samuelea82@gmail.com}

\subjclass[2010]{Primary: 35Kxx, 60Jxx Secondary: 35K05, 35K08, 60J25}

\keywords{ring of adeles, ultrametrics, pseudodifferential equations, heat kernels, Markov processes.}

\begin{abstract}In this article a class of additive invariant positive selfadjoint pseudodifferential unbounded operators  on 
$L^2(\Af)$,  where  $\Af$ is the ring of finite ad\'eles of the rational numbers, is considered   to state a Cauchy problem of
parabolic--type equations. These operators  come from a set of additive invariant non-Archimedean metrics on  $\Af$. The fundamental 
solutions of these parabolic equations  determines normal transition functions of  Markov process on $\Af$. Using the fractional 
Laplacian on the Archimedean place, $\R$, a class of parabolic--type equations on the complete ad\`ele ring, $\A$, is obtained.
\end{abstract}

\maketitle

\section*{Introduction}
\label{introduction}

 The theory of stochastic processes on locally compact  groups and their relationship to  
pseudodifferential equations have become an intense subject of study from some decades ago. In particular, the research of 
pseudodifferential operators over non--Archimedean spaces, such as the field $\Q_p$ of $p$--adic numbers or more general local fields,
has been deeply explored by several authors  (see e.g., \cite{ACE1,AKS, KK2, Koc, KKS,VVZ, Zun} and the references therein). Pursuing a 
generalisation, there have
been many attempts to formalise on both aspects of the theory and several general frameworks have been proposed.

 This article deals with the study of a certain class of parabolic-type  pseudodifferential equations and its associated Markov 
 stochastic processes on the complete ad\`ele group $\A$ of the rational numbers $\Q$. This locally compact topological ring can be
 factorised as 
$\A=\R\times \Af$, where $\Af$ is the totally disconnected part of $\A$ and $\R$ its connected component at the identity. According
to a theorem of Struble, $\Af$ and $\A$ are metrizable with an additive invariant metric all whose spheres are bounded 
(see  \cite{Str}, including \cite{McF, Gro}). In \cite{TZ}, an  ultrametric on $\Af$  is the departing point  to study a 
parabolic--type equation on $\Af$ and its extension to  $\A$, considering the fractional Laplacian on $\R$. More recently, a class of 
invariant ultrametrics on the finite ad\'ele ring $\Af$
has been introduced (\cite{CE}). These ultrametrics are the only additive invariant
metrics whose balls centred at zero  are compact and open subgroups of $\Af$.  If the   radius of any ball agree with its Haar
measure,   the ultrametric is   called \emph{regular}. These ultrametrics have a subclass here called  symmetric. Any symmetric 
regular ultrametric is given by a strictly increasing sequence of natural numbers, beginning with one, totally order by division and 
cofinal with the natural numbers. In this article we  
study a class of parabolic--type equation and its associated Markov processes of $\Af$ related to the afore mentioned symmetric 
regular ultrametric. Similar to   \cite{TZ}, considering  the Archimedean fractional Laplacian on $\R$, we extend the mentioned 
results to the ring of ad\`eles.

Given a symmetric  regular ultrametric $\dAf$ on $\Af$ and a real number $\alpha >0$ leads naturally to  a positive selfadjoint
pseudodifferential operator $\Da$ on $L^2(\Af)$ and to  study the Cauchy problem of a parabolic--type equation  on $\Af$.  
Nonetheless, the classical techniques of Fourier Analysis and a geometrical point of view coming from the regular condition on the
ultrametrics give us a bound that allows to prove regular considerations on the Markov process.  

%

Given $0<\beta \leq 2$ and the fractional Laplacian, $\Db$, in $L^2(\R)$, a positive selfadjoint pseudodifferential operator 
$\Dab=\Da+\Db$ on $L^2(\A)$,  is defined. By construction, any of  these operators are invariant under translations. The following 
theorem encloses  the results of  this writing. 

 \textbf{Theorem \ref{solution_heatequation}:} If $f$ is any complex valued square integrable function on   $\Dom(\Dab)$, then the Cauchy problem

\begin{equation*}
\begin{cases}
\frac{{\partial}u(x,t)}{{\partial t}} + \Dab u(x,t) = 0,  \ x \in \A, \ t > 0,  \\  u(x,t) = f(x)
\end{cases}
\end{equation*} 
has a  solution $u(x,t)$ determined by the convolution of $f$ with the
heat kernel $Z(x,t)$. Moreover, $Z(x,t)$ is the transition density  of a time and space homogeneous Markov process which is bounded, right--continuous and has no discontinuities other than jumps.

The exposition is organised as follows.  Section \ref{FiniteAdeleGroup}
presents  preliminary results on non--Arquimedean and Harmonic analysis on the finite ad\`ele group. In section \ref{ParabolicEquationsonAf} 
the pseudodifferential operator $\Da$ defined on $L^2(\Af)$ is introduced, and the corresponding Cauchy problem related to the homogeneous  heat  equations is treated. Finally, in Section \ref{ParabolicEquationsonA}, a Cauchy problem for the pseudodifferential operator $\Dab$ is studied.

\section[The finite ad\`ele ring of $\Q$]{The finite ad\`ele ring of $\Q$}
\label{FiniteAdeleGroup}

This section introduces the ring of finite ad\`eles, $\Af$, as  a completion of the rational numbers with respect to    additive invariant ultrametrics. For a detailed description of these results we quote \cite{ACE2} and \cite{CE}. Complete information on the classical definition of the   ad\`ele rings can be found  in \cite{Wei}, \cite{RV}, \cite{Lan}, \cite{GGPH}.

\subsubsection{ Ultrametrics on $\Af$} Let $\N=\{1,2,\cdots\}$ denote the set of natural numbers. Let  $\psi(n)$ be implicitly 
defined by any strictly increasing sequence of natural numbers $\{e^{\psi(n)}\}_{n=0}^{\infty}$, which is totally ordered by division
and cofinal with the natural numbers, and with $e^{\psi(0)}=1$\footnote{The notation chosen here is inspired by the second Chebyshev 
function but it should not be confused with it, see \cite{Apo}.}. The function $\psi(n)=\log(e^{\psi(n)})$  can be defined to any integer number $n$, as 
\begin{equation*}
 \psi(n)=\begin{cases} 
 \frac{n}{\abs{n}} \psi(\abs{n}) &  \text{ if } n \neq 0, \\
 0  & \text{ if } n=0. 
 \end{cases}
\end{equation*}
For any integer $n$ define $\Lambda(n)$ implicitly by the equation
$$
e^{\Lambda(n)}=\frac{e^{\psi(n)}}{e^{\psi(n-1)}}.
$$
Then, for any integers $n > m$, there is a relation
$$
\psi(n) - \psi(m) = \sum_{k=m+1}^{ n} \Lambda(k) \quad 
\text{ or equivalently } \quad e^{\psi(n)}/e^{\psi(m)} 
= \prod_{k =m+1}^n e^{\Lambda(k)}. 
$$ 
The collection $\{e^{\psi(n)}\Z \subset \Q\}_{n\in \Z}$ is a neighbourhood base of zero for an additive invariant  topology on $\Q$ formed by  open and closed subgroups. Additionally, it satisfies the properties:
$$
\bigcap_{n\in \Z} e^{\psi(n)}\Z  =0, \quad \text{ and } \quad   \bigcup_{n\in \Z} e^{\psi(n)}\Z=\Q.
$$
For any element $x\in \Q$  the \textsf{adelic order} of $x$ is given by:
\begin{equation*}
 \ord(x):=\begin{cases} 
 \max\{n:x\in e^{\psi(n)}\Z \} &  \text{ if } x \neq 0, \\
  \infty  & \text{ if } x=0. 
 \end{cases}
\end{equation*}
This function satisfies the following properties:
\begin{enumerate}
 \item[$\bullet$] $\ord_{\Af}(x)\in \Z\cup \{\infty \}$ and $\ord_{\Af}(x)=\infty$ if and only if $x=0$.  
\item[$\bullet$] $\ord_{\Af}(x+y)\geq \min\{\, \ord_{\Af}(x), \ord_{\Af}(y)\, \} $.
\end{enumerate}

\textsf{The ring of finite ad\`eles} $\Af$ is defined as the completion of $\Q$ with respect to the non--Archimedean ultrametric, $d:\Q\times \Q\longrightarrow \R^+\cup \{ 0\}
$, given by \footnote{This definition does not depend on the initial filtration.}
$$
d(x,y)=e^{-\psi(\ord(x-y))}.
$$
Every non--zero finite ad\`ele $x\in \Af$ has  a unique  series representation of the form
\[ x = \sum_{l=\gamma}^\infty x_l e^{\psi(l)}, \quad ( x_l \in \{0,1, \ldots, e^{\Lambda(l+1)}-1\}) \]
with $x_\gamma \neq 0$ and $ \gamma=\ord(x) \in \Z$. This series is convergent in the ultrametric of $\Af$ and the numbers $x_l$ appearing in the representation of $x$ are unique. The value $\gamma$, with $\gamma(0)=+\infty$ is  \textsf{the finite adelic order of $x$}. 

The \emph{\textsf{fractional part}} of a finite ad\`ele $x\in \Af$ is defined by
\begin{equation*}
\{x\} := 
\begin{cases} 
\displaystyle{\sum_{k=\gamma(x)}^{-1} a_k  e^{\psi(k)}}	& \text{ if } \gamma(x) < 0, \\
0 																		& \text{ if } \gamma(x) \geq 0. 
\end{cases}
\end{equation*}

The ultrametric $d(x,y)$ makes the  ring  $\Af$  a second countable locally compact totally disconnected topological ring. The \textsf{ring of } \textsf{adelic integers} is the  unit ball 
\[ \Zz = \{\, x \in  \Af \, : \, \norm{x} \leq 1 \,\} \] 
which is the maximal compact and open subring of $\Af$ as well. Denote by $dx$ the Haar measure of the topological Abelian group $(\Af,+)$ normalised such that the Haar measure of  $\Zz$ is equal to one.

Note that the ultrametric $d$ takes values in the set $\{ e^{\psi(n)} \}_{n\in \Z}\cup \{ 0\}$ and the balls centred at zero $B_{n}$ are the sets 
$\{e^{\psi(n)}\Zz \subset \Af\}_{n\in \Z}$, that is, 
\[ B_{n}:=B(0,e^{\psi(n)})=e^{-\psi(n)}\Zz. \]
We denote the \textsf{sphere} centred at zero and radius $e^{\psi(n)}$ as $S_{n}$, i.e.
$$
S_{n}:=S(0,e^{\psi(n)})=B_{n}\backslash B_{n-1}.
$$

The norm induced by this ultrametric is given by 
$$
\norm{x}=e^{- \psi (\ord(x))}
$$ and $\norm{x}=e^{\psi(n)}$ if and only if $x \in S_n$.

A function $\phi:\Af \To \C$ is  \textsf{locally constant} if for any $x\in \Af$, there exists an integer $\ell(x)\in \Z$ such that
\[ \phi(y) = \phi(x), \quad \text{for all}\; y \in B_{\ell(x)}(x) , \]
where $B_{\ell(x)}(x)$ is the closed ball with centre at $x$ and radius $e^{\psi(\ell(x))}$.
Let $\D(\Af)$ denote the  $\C$--vector space  of all locally constant functions with compact support on $\Af$. The vector space $\D(\Af)$ is called  \textsf{Bruhat--Schwartz space} of $\Af$ and an element 
$\phi\in \D(\Af)$  a \textsf{Bruhat--Schwartz function} (or simply a \textsf{test function}) on $\Af$.

If $\phi$ belongs to $\D(\Af)$ and $\phi(x)\neq 0$  for some $x\in \Af$, there exists a largest $\ell=\ell(\phi)\in \Z$, which is called \textsf{the parameter of constancy} of $\phi$, such that, for any $x\in \Af$, we have 
\[ \phi(x+y)=\phi(x), \text{ for all } y \in B_{\ell}. \]

Denote by $\D_k^{\ell}(\Af)$ the finite dimensional vector space consisting of functions whose   parameter of constancy is greater than or equal to $\ell$ and whose  support is contained in $B_k$. 

A sequence $(f_m)_{m\geq 1}$ in $\D(\Af)$ is a Cauchy sequence if there exist $k,\ell \in \Z$ and $M>0$ such that 
$f_m\in \D_k^\ell(\Af)$ if $m\geq M$ and $(f_m)_{m\geq M}$ is a Cauchy sequence in $\D_k^\ell(\Af)$. That is, 
$$  
\D(\Af) =\varinjlim_{\ell \leq k} \D_k^{\ell}(\Af).
$$
With this topology the space $\D(\Af)$ is a complete locally convex topological vector over $\C$. It is also a nuclear space because
it  is the  inductive  limit of the countable family of finite dimensional vector spaces  $\{\D_k^{\ell}(\Af)\}$.

For each compact set $K\subset \Af$, let $\D(K)\subset \D(\Af)$ be the subspace of test functions whose support is contained in $K$. The space $\D(K)$ is dense in $\Co(K)$, the space of complex--valued continuous functions on $K$.

An additive character of the field $\Af$ is defined as a continuous function $\chi:\Af \To \C$ such that $\chi(x+y)=\chi(x)\chi(y)$ and $\abs{\chi(x)}=1$, for all $x,y \in \Af$.  The function $\chi(x)=\exp(2\pi i\{x\})$ defines a canonical additive character of $\Af$, which is trivial on $\Zz$ and not trivial outside $\Zz$, and all characters of $\Af$ are given by $\chi_{\xi}(x)=\chi(\xi x)$, for some $\xi \in \Af$. The Fourier transform of a test function $\phi \in \D(\Af)$ is given by the formula
$$
\F\phi(\xi) = \widehat{\phi}(\xi) = 
\int_{\Af} \phi(x) \chi(\xi x)dx, \qquad  (\xi \in \Af). 
$$ 
The Fourier transform is a continuous linear isomorphism of the space $\D(\Af)$ onto itself and the following inversion formula holds:
\[ \phi(x) = \int_{\Af} \widehat{\phi}(\xi) \chi(-x\xi) d\xi \qquad \big(\phi\in \D(\Af)\big). \]
Additionally, the Parseval -- Steklov equality reads as: 
$$
\int_{\Af} \phi(x) \overline{\psi(x)} dx =\int_{\Af} \widehat{\phi}(\xi) \overline{\widehat{\psi}(\xi)} d\xi,
\qquad \big(\phi,\psi \in \D(\Af)\big).
$$
Last but not least, the Hilbert space $L^2(\Af)$  is a separable Hilbert space and the extended Fourier transform $\F:L^2(\Af) \To L^2(\Af)$ is an isometry of Hilbert spaces. Moreover,  the Fourier inversion formula and  the Parseval -- Steklov identity hold on $L^2(\Af)$.

\begin{remark} \label{oscillatory_integral}
The Haar measure of any ball is equal to its radius: 
$$
\int_{y+B_{n}}d\xi = \int_{B_{n}}d\xi  =  \int_{e^{-\psi(n)}\Zz} d\xi =  e^{\psi(n)} \qquad ( y \in \Af, \, n \in \Z), 
$$
and the area of a sphere is given by
$$
\int_{y+S_{n}}d\xi = \int_{S_{n}}d\xi  =  e^{\psi(n)}-e^{\psi(n-1)} \qquad ( y \in \Af, \, n \in \Z).
$$
Moreover, for any $n \in \Z$  the following formulae hold: 
\begin{enumerate}
 \item 
 \begin{equation*}
 \int_{B_{n}}\chi(-\xi x)dx=\begin{cases} 
 e^{\psi(n)} &  \text{ if } \|\xi \| \leq e^{-\psi(n)}, \\
  0  & \text{ if } \|\xi \|> e^{-\psi(n)}. 
 \end{cases}
\end{equation*}

\item 
 \begin{equation*}
 \int_{S_{n}}\chi(-\xi x)dx=\begin{cases} 
 e^{\psi(n)}-e^{\psi(n-1)} &  \text{ if } \|\xi \| \leq e^{-\psi(n)}, \\
-e^{\psi(n-1)} & \text{ if } \| \xi \|=e^{-\psi(n-1)}, \\
  0  & \text{ if } \|\xi \|\geq e^{-\psi(n-2)}. 
 \end{cases}
\end{equation*}

\end{enumerate}

\end{remark}

\section[Parabolic-type   equations on $\Af$]{Parabolic-type  equations on $\Af$}
\label{ParabolicEquationsonAf}

This section introduces a positive selfadjoint pseudodifferential unbounded operator $\Da$ on $L^2(\Af)$, the Hilbert space of square
integrable functions on $\Af$, and solves the abstract Cauchy problem for  the homogeneous heat  equation  on $L^2(\Af)$ related to
$\Da$. The properties of general evolution equations on Banach spaces can be found in \cite{EN}, \cite{CH} and \cite{Paz}.  The reader can 
consult these and more topics in the excellent books \cite{Igu}, \cite{AKS}, \cite{VVZ} and \cite{Zun}.

\subsection[Pseudodifferential operators on $\Af$]{Pseudodifferential operators on $\Af$}
\label{operators}

For any $\alpha > 0$, consider the   pseudodifferential operator 
$\Da:\Dom(\Da) \subset L^2(\Af)\To L^2(\Af)$ defined by the formula
$$ \Da \phi(x)= \F^{-1}_{\xi \to x}[\norm{\xi}^\alpha \F_{x \to \xi}[f]], $$
for any $\phi$ in the domain
\begin{equation*}
\Dom(\Da):=\left\{\, f \in L^2(\Af) : \, \norm{\xi}^\alpha\widehat{f}(\xi) \in L^2(\Af)  \,\right \}, 
\end{equation*}
This operator is  a  \textsf{pseudodifferential operator with symbol} $\norm{\xi}^\alpha$.   It can be seen that the unbounded 
operator $\Da$, with domain $\Dom(\Da)$, is a positive selfadjoint operator which is  diagonalized by the (unitary) Fourier
transform.  In other words, the following diagram commutes:
\begin{equation}\label{Conmutatividad(Da-ma)}
\begin{CD}
L^{2}(\Af) @>\mathcal{F}>>L^{2}(\Af) \\
@V{ \Da  }VV @VV{ m^\alpha  }V\\
L^{2}(\Af) @>\mathcal{F}>> L^{2}(\Af), 
\end{CD}
\end{equation}
where $m^{\alpha}: L^2(\Af) \To L^2(\Af)$ is  the  multiplicative operator given by $f(\xi) \longmapsto \norm{\xi}^\alpha f(\xi)$,  with (dense) domain 
\begin{equation*}
\Dom(m^{\alpha}):=\left\{\, f \in L^2(\Af) \,: \,  \norm{\xi}^\alpha f(\xi) \in L^2(\Af)  \,\right \}. 
\end{equation*}

As a result, several  properties of $\Da$, depending only on the inner product of $L^{2}(\Af)$, can be translated into analogue properties of  the multiplicative operator $m^{\alpha}$. In particular, the characteristic equation  $\Da f = \lambda f$ with  $f \in  L^2(\Af)\setminus \{0\}$ can be solved by applying the Fourier transform. In fact, 
if $\lambda \in \{e^{\alpha \psi(n)}\}_{n\in \Z}$, the characteristic function $1_{S_n}$, of the sphere $S_n$, is a solution of the characteristic equation, $(\norm{\xi}^{\alpha} - \lambda )\widehat{f}(\xi)=0$, of the multiplicative operator $\ma$. Otherwise, if $\lambda \notin \{e^{\alpha \psi(n)}\}_{n\in \Z}$, the function $\norm{\xi}^{\alpha} - \lambda$ is bounded from below and $\lambda$ is in the resolvent set of $\ma$. Since the Fourier transform is unitary, the point spectrum of $\Da$ is the set $\{e^{\alpha \psi(n)}\}_{n\in \Z}$, with corresponding eigenfunctions  
$\{ \F^{-1}(\Delta_{S_n}) \}_{n\in \Z}$. Finally, $\{0\}$ forms part of  the spectrum as a limit point. Each eigenspace is infinite dimensional and there exists a well defined wavelet base which is also made of eigenfunctions.

\begin{remark}
The operator $\Da$ is derived from  the chosen double sequence $\big( e^{\psi(n)}\big)_{n\in\Zz}$.  Any operator $\Da$ is a finite adelic analogue of the Vladimirov operator on $\Q_p$.
 
\end{remark}

\subsection[A Cauchy problem on $L^2(\Af)$]{A Cauchy problem on $L^2(\Af)$}
\label{HeatEquation}
For  $f(x) \in \Dom(\Da) \subset L^2(\Af)$, consider  the abstract Cauchy problem
\begin{equation}
\label{ACPinAf}
\begin{cases}
\frac{{\partial}u(x,t)}{{\partial t}} + \Da u(x,t)=0,  \ x \in \Af, \ t \geq 0   \\
\ u(x,t)=f(x).
\end{cases}
\end{equation} 
This problem  will be pointed as    abstract Cauchy problem  (\ref{ACPinAf}). Notice that for each invariant pseudodifferential operator  $\Da$, the abstract Cauchy problem above  is a finite adelic counterpart of the Archimedean abstract Cauchy problem for the  homogeneous heat equation.

The abstract Cauchy problem (\ref{ACPinAf}) is considered in the sense of the Hilbert space $L^2(\Af)$, that is to say, a function
$u:\Af\times [0,\infty) \To \C$ is called  a \textsf{ solution}   if: 
\begin{enumerate}[a.]
\item $u:[0,\infty) \To L^2(\Af)$ is a continuously differentiable function,
\item $u(x,t)\in \Dom(\Da)$, for all $t\geq 0$ and,
\item $u(x,t)$ is satisfies the initial
value problem (\ref{ACPinAf}). 
\end{enumerate}
 
The abstract Cauchy  problem (\ref{ACPinAf}) is well--posed and  its  solution is well understood from the theory of semigroups of linear operators over Banach spaces.  This solution is described in the following section.

\subsection[Semigroup of operators]{Semigroup of operators}
\label{semigroup_operators}

From  the Hille--Yoshida Theorem, to the positive  selfadjoint operator $\Da$, there corresponds a strongly continuous contraction
semigroup 
$$S(t)=\exp(-t\Da):L^2(\Af)\To L^2(\Af) \qquad(t \geq 0),$$ with infinitesimal generator $-\Da$. It follows that $\{S(t)\}_{t\geq 0}$ has the following properties:

\begin{itemize}

\item For any  $t\geq 0$, $S(t)$ is a bounded operator with  operator norm less or equal to one.
\item The application $t \longmapsto S(t)$ is strongly continuous for $t\geq 0$.
\item  $S(0)$ is the identity operator in $L^2(\Af)$, i.e.  $S(0)(f)=f$, for all $f \in L^2(\Af)$,
\item It has the semigroup property: $S(t)\circ S(s)=S(t+s)$. 
\item If $f\in \Dom(-\Da)$, then  $S(t)f \in \Dom(-\Da)$ for all $t \geq 0$,
the $L^2$ derivative $\frac{d}{dt} S(t)f$ exists, it is continuous for $t \geq 0$, and  is  given by
\[  \frac{d}{dt} S(t)f \Big|_{t=t_0^+} = -\Da S(t_0)f =  -S(t_0)\Da f\qquad  (t_0 \geq 0 ).   \]
\end{itemize}

All this means that $S(t)f$ is a solution of the Cauchy problem (\ref{HeatEquation}) with initial condition
$f\in \Dom(\Da)$.

On the other hand, for  $f(\xi) \in \Dom(\ma) \subset L^2(\Af)$, consider  the abstract Cauchy problem
\begin{equation}
\label{ACPmultiplicativoinAf}
\begin{cases}
\frac{{\partial}u(\xi,t)}{{\partial t}} + \ma u(\xi,t)=0,  \ \xi \in \Af, \ t \geq 0   \\
\ u(\xi,t)=f(\xi).
\end{cases}
\end{equation} 

The solution of this problem is given by the strongly continuous contraction semigroup $
\exp(-tm^\alpha) :L^{2}(\Af) \To L^{2}(\Af)$ given by $$  f(\xi) \mapsto f(\xi) \exp(-t \norm{\xi}^\alpha),
$$  which is the semigroup that corresponds to the positive selfadjoint multiplicative operator $m^\alpha$, under the Hille--Yoshida Theorem and whose infinitesimal generator is equal to $-\ma$. From the fact that  the Fourier transform is an isometry on $L^{2}(\Af)$ and converts the abstract Cauchy problem  (\ref{ACPinAf}) into (\ref{ACPmultiplicativoinAf}), the commutative diagram (\ref{Conmutatividad(Da-ma)}), and corresponding definitions of the infinitesimal generators of $S(t)$ and 
$\exp(-tm^\alpha)$, the following diagram commutes
\begin{equation}
\begin{CD}
L^{2}(\Af) @> \F >> L^{2}(\Af) \\
@V{S(t)}VV @VV\exp(-tm^\alpha)V\\
L^{2}(\Af) @> \F >> L^{2}(\Af). \notag
\end{CD}
\end{equation}

\subsection[A heat kernel]{A heat kernel}
\label{heat_kernel}
In order to describe the theoretical solution given by the Hille--Yosida Theorem we introduce the heat kernel:
\begin{align}
\label{HeatKernel}
Z (x,t) &= \F^{-1}\big(\exp(-t\norm{\xi}^{\alpha})\big) = 
\int_{\Af}\chi(-x\xi)\exp(-t\norm{\xi}^{\alpha})d\xi \notag.
\end{align}

The first estimate of the heat kernel is given in the following :

\begin{lemma}
\label{heat-estimatesI} 
For any $t>0$, $\alpha >0$, and $x\in \Af$,  $Z(x,t)$ is well defined.
Furthermore, for any $t>0$ and $\alpha >0$,  the heat kernel $Z(x,t)$  satisfies the inequality
\[ \abs{Z(x,t)} \leq  C t^{-1/\alpha}, \qquad (x\in \Af), \]
where $C$ is a constant depending on $\alpha$.
\end{lemma}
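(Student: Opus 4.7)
The plan is to reduce everything to a bound on $\int_{\Af}e^{-t\|\xi\|^{\alpha}}d\xi$, because the additive character has modulus one. So I would begin by observing $|Z(x,t)|\leq\int_{\Af}e^{-t\|\xi\|^{\alpha}}d\xi$, which makes both the well-definedness of $Z(x,t)$ and the sought $L^\infty$-bound equivalent to controlling this integral uniformly in $x$.

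Next, I would exploit the radial structure (the integrand depends only on $\|\xi\|$) and decompose $\Af=\bigsqcup_{n\in\Z} S_{n}$. Using the computation from Remark \ref{oscillatory_integral}, $\mu(S_{n})=e^{\psi(n)}-e^{\psi(n-1)}$, so
\begin{equation*}
\int_{\Af}e^{-t\|\xi\|^{\alpha}}d\xi=\sum_{n\in\Z}\bigl(e^{\psi(n)}-e^{\psi(n-1)}\bigr)\exp\bigl(-t\,e^{\alpha\psi(n)}\bigr).
\end{equation*}
The crucial choice is the threshold index $n_{0}:=\max\{\,n:e^{\psi(n)}\leq t^{-1/\alpha}\,\}$, which is finite since $e^{\psi(n)}\to 0$ as $n\to-\infty$ and $e^{\psi(n)}\to\infty$ as $n\to+\infty$. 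I would then split the integral into $\int_{B_{n_{0}}}$ and $\int_{\Af\setminus B_{n_{0}}}$. For the first piece, $e^{-t\|\xi\|^{\alpha}}\leq 1$ gives a bound by $\mu(B_{n_{0}})=e^{\psi(n_{0})}\leq t^{-1/\alpha}$, which is already of the desired order.

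The main technical obstacle is the tail $\sum_{n\geq n_{0}+1}$, since there I need to extract exactly one factor of $t^{-1/\alpha}$ out of the super-exponential decay and also sum the resulting series uniformly in the (arbitrary) ultrametric. I would change variables by setting $u_{n}:=t\,e^{\alpha\psi(n)}$, so that $e^{\psi(n)}=(u_{n}/t)^{1/\alpha}$ and the tail equals
\begin{equation*}
t^{-1/\alpha}\sum_{n\geq n_{0}+1}u_{n}^{1/\alpha}\,e^{-u_{n}}\cdot\bigl(1-e^{-\alpha\Lambda(n)}\bigr).
\end{equation*}
The key structural input is that $e^{\Lambda(n+1)}=e^{\psi(n+1)}/e^{\psi(n)}$ is an integer $\geq 2$ (the sequence $\{e^{\psi(n)}\}$ consists of strictly increasing naturals ordered by divisibility), so $u_{n+1}/u_{n}\geq 2^{\alpha}$ and $u_{n_{0}+1}>1$. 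Combined with the elementary bound $u^{1+1/\alpha}e^{-u}\leq M_{\alpha}$ (attained at $u=1+1/\alpha$), this yields $u_{n}^{1/\alpha}e^{-u_{n}}\leq M_{\alpha}/u_{n}\leq M_{\alpha}\,2^{-\alpha(n-n_{0}-1)}$, and the resulting geometric series sums to $M_{\alpha}/(1-2^{-\alpha})$, a constant depending only on $\alpha$. Adding the two pieces gives $|Z(x,t)|\leq C\,t^{-1/\alpha}$ with $C=C(\alpha)$, and the same computation proves the integral defining $Z(x,t)$ converges absolutely, so $Z(x,t)$ is well defined on $\Af\times(0,\infty)$.
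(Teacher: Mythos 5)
Your argument is correct and reaches the stated bound, but the decisive estimating step differs from the paper's. Both proofs begin identically, bounding $\abs{Z(x,t)}$ by $\int_{\Af}e^{-t\norm{\xi}^{\alpha}}d\xi$ and writing this as the sum over spheres $\sum_{n\in\Z}\bigl(e^{\psi(n)}-e^{\psi(n-1)}\bigr)e^{-te^{\alpha\psi(n)}}$. The paper then finishes in one line: because the Haar measure of each ball equals its radius and $s\mapsto e^{-ts^{\alpha}}$ is decreasing, this sum is a lower Riemann sum for $\int_{0}^{\infty}e^{-ts^{\alpha}}\,ds$, which evaluates to $t^{-1/\alpha}\Gamma(1/\alpha+1)$ and gives the explicit constant $C=\Gamma(1/\alpha+1)$ with no case analysis. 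You instead split at the threshold $n_{0}=\max\{n:e^{\psi(n)}\leq t^{-1/\alpha}\}$ and control the tail by a geometric series, using the structural fact that $e^{\Lambda(n)}\geq 2$ for every $n$ (each term of the defining sequence properly divides the next). This is more laborious and yields a worse constant, but it makes explicit which feature of the ultrametric forces uniformity in $t$, and it would survive in settings where the ratios $e^{\Lambda(n)}$ are only bounded below by some $\lambda>1$ rather than being integers, whereas the paper's Riemann-sum comparison leans on the regularity (measure of a ball equals its radius) in an essential way. One small slip: after your substitution the volume factor is $1-e^{-\Lambda(n)}$, not $1-e^{-\alpha\Lambda(n)}$, since $(u_{n-1}/u_{n})^{1/\alpha}=e^{-\Lambda(n)}$; this is immaterial because you only use that this factor is at most $1$.
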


\begin{proof}
Since the Haar measure of any ball is equal to its radius, we obtain
\begin{align*}
\abs{Z(x,t)} \leq \int_{\Af} \exp(-t\norm{\xi}^{\alpha}) d\xi <\int_0^{\infty} \exp(-ts^{\alpha}) ds &=t^{-1/\alpha}\Gamma(1/\alpha+1), 
\end{align*} where $\Gamma$ denotes the Archimedean gamma function.
Therefore,   $Z(x,t)$ is well defined and   the second assertion holds with $C=\Gamma(1/\alpha+1)$.
\end{proof}

\begin{proposition}\label{freeHeatKernel} 
The heat kernel $Z(x,t)$ is a positive function for all $x $ and $t>0 $. In addition 

\begin{align*}
Z(x,t)  =\sum_{\substack{n\in \Z \\ e^{\psi(n)} \leq \norm{x}^{-1}}} e^{\psi(n)} 
\left\{\exp(-t e^{\alpha\psi(n)}) - \exp(-t e^{\alpha\psi(n+1)}) \right\} \notag. 
\end{align*}

\end{proposition}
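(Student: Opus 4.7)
The idea is to compute the integral defining $Z(x,t)$ by partitioning $\Af$ into the spheres $S_n$, on which the radial function $\exp(-t\|\xi\|^\alpha)$ takes the constant value $\exp(-te^{\alpha\psi(n)})$, apply the oscillatory integral in Remark \ref{oscillatory_integral}(2), and finally repackage the resulting series by a telescoping (Abel summation) argument.

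First, using $\Af=\bigsqcup_{n\in\Z}S_n$ together with the tail bound $\|\xi\|^\alpha = e^{\alpha\psi(n)}$ on $S_n$ and the absolute convergence already established in Lemma \ref{heat-estimatesI}, I would write
\[
Z(x,t)=\sum_{n\in\Z}\exp\!\bigl(-te^{\alpha\psi(n)}\bigr)\int_{S_n}\chi(-x\xi)\,d\xi.
\]
Next, I would invoke Remark \ref{oscillatory_integral}(2), with the roles of $x$ and $\xi$ interchanged, to evaluate $\int_{S_n}\chi(-x\xi)\,d\xi$. Setting $M:=\ord(x)$ (so $\|x\|=e^{-\psi(M)}$, with the convention $M=+\infty$ if $x=0$), the three cases of the Remark translate to: the integral equals $e^{\psi(n)}-e^{\psi(n-1)}$ when $n\le M$, equals $-e^{\psi(M)}$ when $n=M+1$, and vanishes when $n\ge M+2$. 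Thus only finitely many ``boundary'' terms matter and the series reduces to
\[
Z(x,t)=\sum_{n\le M}\exp\!\bigl(-te^{\alpha\psi(n)}\bigr)\bigl(e^{\psi(n)}-e^{\psi(n-1)}\bigr)\;-\;e^{\psi(M)}\exp\!\bigl(-te^{\alpha\psi(M+1)}\bigr).
\]

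The main computational step is a discrete Abel-type rearrangement of this expression. Writing $a_n:=\exp(-te^{\alpha\psi(n)})$ and $b_n:=e^{\psi(n)}$, I would shift the index in $\sum_{n\le M}a_n b_{n-1}$ to obtain
\[
\sum_{n\le M}a_n(b_n-b_{n-1})-a_{M+1}b_M=\sum_{n\le M}b_n(a_n-a_{n+1}),
\]
which is exactly the desired series, since $\{n:n\le M\}=\{n:e^{\psi(n)}\le\|x\|^{-1}\}$ (taking $\|0\|^{-1}=+\infty$ when $x=0$). Positivity then follows immediately: for every $n$ in the index set, $e^{\psi(n)}>0$ and, because $\psi$ is strictly increasing and $t>0$, $\exp(-te^{\alpha\psi(n)})>\exp(-te^{\alpha\psi(n+1)})$, so each summand is strictly positive.

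The only delicate point is the rigorous handling of the Abel summation over an index set unbounded below. I expect this to be the main (and only) obstacle, and I would resolve it by a standard truncation: cut the sum at $n\ge -N$, perform the telescoping on the truncated sum, and let $N\to\infty$. As $n\to-\infty$ one has $b_n=e^{\psi(n)}\to 0$ and $|a_n-a_{n+1}|=O\bigl(t(e^{\alpha\psi(n+1)}-e^{\alpha\psi(n)})\bigr)$, so the tail terms $b_n(a_n-a_{n+1})$ as well as the boundary term $a_{-N}b_{-N-1}$ decay super-exponentially; this justifies the manipulation and, when $x=0$, the analogous decay for large positive $n$ secures convergence of the full series.
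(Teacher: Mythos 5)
Your proposal is correct and follows essentially the same route as the paper: decompose $\Af$ into spheres $S_n$, evaluate $\int_{S_n}\chi(-x\xi)\,d\xi$ via Remark \ref{oscillatory_integral}(2), and telescope the resulting series into the stated form, with positivity read off term by term. The only difference is that you make explicit the truncation argument justifying the Abel summation over an index set unbounded below (and the $x=0$ case), which the paper leaves implicit.
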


\begin{proof} 
From  Remark \ref{oscillatory_integral}, if $\norm{x}=e^{-\psi(m)}$, then
\begin{align*}
Z(x,t) &= \sum_{n=-\infty}^{\infty} \int_{S_n} \chi(-x \xi) \exp(-t\norm{\xi}^{\alpha}) d\xi \\
& = \sum_{n=-\infty}^{\infty} \exp(-t e^{\alpha\psi(n)}) \int_{S_n} \chi(-x \xi) d\xi \notag \\
& = \sum_{n=-\infty}^{m+1} \exp( -t e^{\alpha\psi(n)}) \int_{S_n} \chi(-x \xi) d\xi \notag \\ 
& = -\exp(-t e^{\alpha\psi(m+1)}) e^{\psi(m)} + 
\sum_{n=-\infty }^{m}\exp(-t e^{\alpha\psi(n)}) (e^{\psi(n)} - {e^{\psi(n-1)}})\\
& = \sum_{n=-\infty}^{m} 
e^{\psi(n)} \left\{\exp(-t e^{\alpha\psi(n)}) - \exp( -t e^{\alpha\psi(n+1)}) \right\} \notag \\
& =\sum_{\substack{n\in \Z \\ e^{\psi(n)} \leq \norm{x}^{-1}}} e^{\psi(n)} 
\left\{\exp(-t e^{\alpha\psi(n)}) - \exp(-t e^{\alpha\psi(n+1)}) \right\} \notag. 
\end{align*}

This implies that $Z(x,t)$ is a positive function for all $x $ and $t>0 $. 

\end{proof}

\begin{remark} It is important to notice that the expression of the heat kernel in Proposition \ref{freeHeatKernel}  does not depend on the algebraic structure of $\Af$. As a matter of fact, $Z(x,t)$ depends  only on the values  of the double sequence $(e^{\psi(n)})_{n \in \Z}$ and the ultrametric structure defined by this sequence on $\Af$. 
\end{remark}

\begin{corollary} 
\label{measure1}
The heat kernel is the distribution of a probability measure on $\Af$, i.e. $Z(x,t)\geq 0$ and
\[ \int_{\Af} Z(x,t)dx = 1, \] 
for all $t>0$. 
 
\end{corollary}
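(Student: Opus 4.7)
The nonnegativity of $Z(x,t)$ was already established in Proposition \ref{freeHeatKernel}, so the content of the corollary is the normalization $\int_{\Af} Z(x,t) \, dx = 1$. My plan is to compute this integral directly from the explicit series representation given in Proposition \ref{freeHeatKernel}, reducing it to a telescoping sum.

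First, I would rewrite the formula of Proposition \ref{freeHeatKernel} as
\[
Z(x,t) = \sum_{n\in \Z} e^{\psi(n)} \bigl\{\exp(-t e^{\alpha\psi(n)}) - \exp(-t e^{\alpha\psi(n+1)})\bigr\} \, \mathbf{1}_{\{\|x\| \leq e^{-\psi(n)}\}}(x),
\]
observing that each summand is nonnegative: since $\psi$ is strictly increasing we have $e^{\alpha\psi(n+1)} > e^{\alpha\psi(n)}$, so $\exp(-t e^{\alpha\psi(n)}) - \exp(-t e^{\alpha\psi(n+1)}) > 0$ for $t > 0$. This positivity justifies applying Tonelli's theorem to exchange the sum and the integral over $\Af$.

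Next I would use Remark \ref{oscillatory_integral} to compute
\[
\int_{\Af} \mathbf{1}_{\{\|x\| \leq e^{-\psi(n)}\}}(x) \, dx = \int_{B_{-n}} dx = e^{-\psi(n)},
\]
so that the factor $e^{\psi(n)}$ in each summand cancels. The result is the telescoping series
\[
\int_{\Af} Z(x,t) \, dx = \sum_{n\in \Z} \bigl\{\exp(-t e^{\alpha\psi(n)}) - \exp(-t e^{\alpha\psi(n+1)})\bigr\}.
\]

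Finally, recognizing this as a telescoping sum, I would pass to the two limits. Since $\{e^{\psi(n)}\}_{n\geq 0}$ is strictly increasing and cofinal in $\N$, we have $\psi(n) \to +\infty$ as $n \to +\infty$, and by the oddness extension $\psi(-n) = -\psi(n)$ we have $\psi(n) \to -\infty$ as $n \to -\infty$. Consequently, $\exp(-t e^{\alpha\psi(n)}) \to 0$ as $n \to +\infty$ and $\exp(-t e^{\alpha\psi(n)}) \to 1$ as $n \to -\infty$, yielding $\int_{\Af} Z(x,t) \, dx = 1 - 0 = 1$. The only place requiring any care is the exchange of sum and integral, and since every term is nonnegative this is simply Tonelli; no real obstacle is expected.
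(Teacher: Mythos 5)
Your proof is correct and follows essentially the same route as the paper: both reduce $\int_{\Af} Z(x,t)\,dx$ to the telescoping series $\sum_{n\in\Z}\{\exp(-te^{\alpha\psi(n)})-\exp(-te^{\alpha\psi(n+1)})\}=1$ by interchanging summation and integration and cancelling $e^{\psi(n)}$ against the measure $e^{-\psi(n)}$ of the ball $\{\|x\|\le e^{-\psi(n)}\}$. The paper reaches the same point by first integrating over spheres and then swapping a double sum, whereas you invoke Tonelli directly on the indicator-function expansion — a cosmetic difference, with your version making the positivity justification for the interchange slightly more explicit.
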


\begin{proof} From Proposition  \ref{freeHeatKernel} it follows that 
\begin{align*}
\int_{\Af} Z(x,t)dx &= \sum_{l=-\infty}^{\infty} \int_{S_l}  Z(x,t) dx \\ 
& = \sum_{l=-\infty}^{\infty}  Z(e^{\psi(l)} ,t) \big(e^{\psi(l)} - {e^{\psi(l-1)}}\big)\\
& =\sum_{l=-\infty}^{\infty} \left( \sum_{\substack{n\in \Z \\ e^{\psi(n)} \leq e^{-\psi(l)} }} e^{\psi(n)} 
\left\{\exp(-t e^{\alpha\psi(n)}) - \exp(-t e^{\alpha\psi(n+1)}) \right\} \right) \big(e^{\psi(l)} - {e^{\psi(l-1)}}\big)\\
& =\sum_{n=-\infty}^{\infty} \left( e^{\psi(n)} 
\left\{\exp(-t e^{\alpha\psi(n)}) - \exp(-t e^{\alpha\psi(n+1)}) \right\} \right) \left( \sum_{l=-\infty}^{-n}\big(e^{\psi(l)} - {e^{\psi(l-1)}}\big)\right)\\
&=\sum_{n=-\infty}^{\infty}  
\left\{\exp(-t e^{\alpha\psi(n)}) - \exp(-t e^{\alpha\psi(n+1)}) \right\}\\
&=1.
\end{align*}
\end{proof}

\begin{lemma}
\label{heat-estimatesII} 
For any $t>0$, $\alpha >0$ and $x\in \Af$, the heat kernel $Z(x,t)$ is positive and satisfies the inequality
\begin{align} Z(x,t) \leq \norm{x}^{-1} \big(1 - \exp(-t e^{\alpha\psi(m+1)})\big), \end{align}
where  $\norm{x}=e^{-\psi(m)}$.
\end{lemma}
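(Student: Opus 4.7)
The plan is to use the explicit series representation of $Z(x,t)$ given by Proposition \ref{freeHeatKernel} and then exploit the monotonicity of the weights $e^{\psi(n)}$ together with a telescoping argument.

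First I would translate the condition $\norm{x}=e^{-\psi(m)}$ into the index condition on the sum. Since $\norm{x}^{-1}=e^{\psi(m)}$ and the sequence $(e^{\psi(n)})_{n\in\Z}$ is strictly increasing, the constraint $e^{\psi(n)}\leq\norm{x}^{-1}$ is equivalent to $n\leq m$. Thus Proposition \ref{freeHeatKernel} rewrites as
\[ Z(x,t) = \sum_{n=-\infty}^{m} e^{\psi(n)} \bigl\{\exp(-t e^{\alpha\psi(n)}) - \exp(-t e^{\alpha\psi(n+1)}) \bigr\}. \]
Positivity is then immediate: each factor $\exp(-te^{\alpha\psi(n)}) - \exp(-te^{\alpha\psi(n+1)})$ is nonnegative because $\psi$ is strictly increasing (so $e^{\alpha\psi(n)} < e^{\alpha\psi(n+1)}$).

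Next, for the estimate, I would bound the geometric weight by its maximal value: for every $n\leq m$, $e^{\psi(n)}\leq e^{\psi(m)}=\norm{x}^{-1}$. Pulling this bound out of the sum yields
\[ Z(x,t) \leq \norm{x}^{-1}\sum_{n=-\infty}^{m}\bigl\{\exp(-t e^{\alpha\psi(n)}) - \exp(-t e^{\alpha\psi(n+1)})\bigr\}. \]
The remaining sum is telescoping. Evaluating partial sums from $N$ to $m$ gives $\exp(-t e^{\alpha\psi(N)}) - \exp(-t e^{\alpha\psi(m+1)})$, and since the sequence $\{e^{\psi(n)}\}$ is cofinal with $\N$, necessarily $\psi(n)\to -\infty$ as $n\to -\infty$, so $e^{\alpha\psi(N)}\to 0$ and $\exp(-te^{\alpha\psi(N)})\to 1$. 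Letting $N\to -\infty$ produces the claimed bound $\norm{x}^{-1}(1-\exp(-te^{\alpha\psi(m+1)}))$.

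The only step requiring a small justification is the convergence $\exp(-te^{\alpha\psi(N)})\to 1$ as $N\to -\infty$; this follows from the hypothesis that $\{e^{\psi(n)}\}_{n=0}^\infty$ is strictly increasing and cofinal with $\N$, which forces $e^{\psi(n)}\to 0$ as $n\to -\infty$ by the reciprocal extension of $\psi$ to negative integers. I do not anticipate any genuine obstacle here; the proof is essentially a direct reading of Proposition \ref{freeHeatKernel} combined with the telescoping identity.
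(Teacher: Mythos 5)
Your proof is correct and follows essentially the same route as the paper: start from the series in Proposition \ref{freeHeatKernel}, bound each weight $e^{\psi(n)}$ by $e^{\psi(m)}=\norm{x}^{-1}$, and telescope the remaining sum to $1-\exp(-te^{\alpha\psi(m+1)})$. You simply make explicit the limit $\exp(-te^{\alpha\psi(N)})\to 1$ as $N\to-\infty$, which the paper leaves implicit.
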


\begin{proof} This follows from the inequality 
\begin{align*}
Z(x,t) &\leq \norm{x}^{-1}\sum_{\substack{n\in \Z \\ e^{\psi(n)} \leq \norm{x}^{-1}}} 
\left\{ \exp(-t e^{\alpha\psi(n)}) - \exp(-t e^{\alpha\psi(n+1)}) \right\}  \\
 &\leq \norm{x}^{-1}\big (1 - \exp(-t e^{\alpha\psi(m+1)})\big) 
\end{align*}
\end{proof}

\begin{proposition}
\label{heat-distribution}
The heat kernel satisfies the following properties:
\begin{itemize}
\item It is the distribution of a probability measure on $\Af$, i.e. $Z(x,t)\geq 0$ and
\[ \int_{\Af} Z(x,t)dx = 1, \] 
for all $t>0$. 
 
\item It converges to the Dirac distribution as $t$ tends to zero:
\[ \lim_{t \to 0} \int_{\Af} Z(x,t) f(x) dx = f(0), \] 
for all $f\in \D(\Af)$.
\item It has the Markovian property:
\[ Z(x,t+s) = \int_{\Af} Z(x-y,t) Z(y,s) dy. \]
\end{itemize}
\end{proposition}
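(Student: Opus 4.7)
The first assertion is essentially a restatement of Corollary \ref{measure1} together with the positivity from Proposition \ref{freeHeatKernel}, so nothing new is required there. My plan is to handle the remaining two items by passing to the Fourier side, where $\widehat{Z}(\xi,t)=\exp(-t\|\xi\|^\alpha)$ is a bounded, real--valued function with clean pointwise and multiplicative behaviour.

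For the delta--convergence, the starting observation is that any $f\in\D(\Af)$ has $\widehat{f}\in\D(\Af)\subset L^1(\Af)$. Applying the Parseval--Steklov identity (together with the fact that $\|\cdot\|$, and hence $\widehat{Z}(\cdot,t)$, is symmetric under negation) one obtains
$$\int_{\Af}Z(x,t)\,f(x)\,dx=\int_{\Af}\exp(-t\|\xi\|^\alpha)\,\widehat{f}(\xi)\,d\xi.$$
The integrand on the right is dominated by $|\widehat{f}(\xi)|\in L^1(\Af)$ uniformly in $t>0$ and tends pointwise to $\widehat{f}(\xi)$ as $t\to 0^+$; dominated convergence combined with Fourier inversion at $x=0$ then gives
$$\lim_{t\to 0^+}\int_{\Af}Z(x,t)f(x)\,dx=\int_{\Af}\widehat{f}(\xi)\,d\xi=f(0).$$

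For the Markov identity, I would use that $Z(\cdot,t)\in L^1(\Af)$ by the first item, so that $Z(\cdot,t)*Z(\cdot,s)$ is a well--defined $L^1$ function whose Fourier transform factorises as
$$\widehat{Z}(\xi,t)\,\widehat{Z}(\xi,s)=\exp(-(t+s)\|\xi\|^\alpha)=\widehat{Z}(\xi,t+s).$$
Injectivity of the Fourier transform on $L^1(\Af)$ forces $Z(x,t+s)=(Z(\cdot,t)*Z(\cdot,s))(x)$ for almost every $x$. The main obstacle I anticipate is promoting this to a pointwise identity for every $x\in\Af$: both sides are continuous in $x$ (the left from the explicit series in Proposition \ref{freeHeatKernel}, the right because $Z(\cdot,t)\in L^\infty\cap L^1$ by Lemma \ref{heat-estimatesI}, so the convolution of a bounded function with an $L^1$ function is continuous), and the almost--everywhere identity then extends to hold everywhere.
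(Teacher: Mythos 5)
Your proposal is correct, and for the first and third items it essentially coincides with the paper's proof (the paper also disposes of the Markov property by remarking that it ``follows from the Fourier inversion formula and the related property of the exponential function''; your extra care in promoting the a.e.\ identity to a pointwise one via continuity of both sides is a detail the paper omits). The genuine difference is in the delta--convergence. The paper works entirely on the physical side: it writes $\int Z(x,t)(f(x)-f(0))\,dx$, uses that $f\in\D(\Af)$ is locally constant so the integrand vanishes on a ball $B_{\ell}$ about the origin, and then invokes the pointwise bound of Lemma \ref{heat-estimatesII} (equivalently the tail estimate of Lemma \ref{integralestimate}) to show that the mass of $Z(\cdot,t)$ outside $B_{\ell}$ tends to $0$ as $t\to 0^{+}$. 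You instead pass to the Fourier side via Parseval--Steklov and apply dominated convergence to $\exp(-t\norm{\xi}^{\alpha})\widehat{f}(\xi)$ with dominating function $\abs{\widehat{f}}\in L^{1}(\Af)$, concluding with Fourier inversion at $x=0$. Both arguments are valid. Yours is shorter, does not need the heat--kernel estimates at all, and extends verbatim to any $f$ whose Fourier transform is integrable; the paper's argument is the more ``probabilistic'' one --- it exhibits directly that the measure $Z(x,t)\,dx$ concentrates at the origin, uses only the kernel bounds rather than the explicit symbol, and is the version that transfers to settings where one has heat--kernel estimates but no tractable Fourier multiplier.
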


\begin{proof} 
From Corollary \ref{measure1},  $Z(x,t)$ is in $L^1(\Af)$ for any $t>0$ and \[ \int_{\Af} Z(x,t) dx = 1. \] 
Using this equality, the fact that $f\in \D(\Af)$ is a locally constant function with  compact support and 
Lemma \ref{heat-estimatesII}, we conclude that
\[ \lim_{t \to 0} \int_{\Af} Z(x,t) \big(f(x)-f(0)\big) dx = 0. \]
The Markovian property follows from the Fourier inversion formula and the related property  of the exponential function.
\end{proof}

\begin{remark}It is worth mentioning that  the heat kernel associated to the isotropic Laplacian  of the ultrametric space $(\Af, d)$, with the Haar measure of $\Af$ as speed measure, and distribution function $e^{1/r}$, is equal to (see \cite{BGPW} for the definitions): 
$$
\sum_{\substack{n\in \Z \\ e^{\psi(n)} \leq \norm{x}^{-1}}} e^{\psi(n)} 
\left\{\exp(-t e^{\alpha\psi(n-1)}) - \exp(-t e^{\alpha\psi(n)}) \right\} \notag. 
$$
This kernel differes from  $Z(x,t)$ only by a term. However this one is very important when considering  bounds $\ref{heat-estimatesII}$.
\end{remark}

 \subsection[The  solution of the heat equation]{The  solution of the heat equation} 

Given $t> 0$  define the operator $T(t):L^2(\Af) \To L^2(\Af)$ by  the convolution with the heat kernel
\[T(t)f(x) = Z(x,t) * f(x), \qquad(f\in L^2(\Af)), \]  and let $T(0)$ be the identity operator on $L^2(\Af)$.
From Proposition \ref{heat-distribution} and Young's inequality   the family of operators $\{T(t)\}_{t \geq 0}$  is a strongly continuous contraction semigroup. 

The main theorem of the diffusion equation on the ring $\Af$  is the following.

\begin{theorem}\label{maintheorem} 
Let $\alpha>0$ and let $S(t)$ be  the $\Co_0$--semigroup generated by the operator $-\Da$. The operators $S(t)$ and  $T(t)$ agree for each
$t\geq 0$. In other words, for $f \in \Dom(\Da)$ and for $ t > 0$   the  solution of the abstract Cauchy problem (\ref{ACPinAf}) 
is given by $u(x,t)=Z(x,t) * f(x)$. 
\end{theorem}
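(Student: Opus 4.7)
The plan is to identify the convolution semigroup $T(t)$ with $S(t)$ by conjugating both through the Fourier transform and showing they correspond to the same multiplication operator on the Fourier side. By construction the heat kernel satisfies $Z(\cdot,t)=\mathcal{F}^{-1}\bigl(\exp(-t\|\xi\|^{\alpha})\bigr)$, while the commutative diagram at the end of Section \ref{semigroup_operators} identifies $\mathcal{F}\,S(t)\,\mathcal{F}^{-1}$ with the multiplication semigroup $\exp(-t m^{\alpha})$, i.e.\ pointwise multiplication by $\exp(-t\|\xi\|^{\alpha})$. So the task reduces to checking that convolution by $Z(\cdot,t)$ also acts as multiplication by $\exp(-t\|\xi\|^{\alpha})$ after Fourier transformation.

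First I would verify that $\widehat{Z}(\xi,t)=\exp(-t\|\xi\|^{\alpha})$ as elements of $L^{2}(\Af)$. The symbol $\exp(-t\|\xi\|^{\alpha})$ is bounded by $1$ and, by the integrability estimate already used in the proof of Lemma \ref{heat-estimatesI}, it lies in $L^{1}(\Af)\cap L^{2}(\Af)$. Hence $Z(\cdot,t)=\mathcal{F}^{-1}\bigl(\exp(-t\|\cdot\|^{\alpha})\bigr)$ is a well defined element of $L^{2}(\Af)$, and Corollary \ref{measure1} additionally places it in $L^{1}(\Af)$. The Fourier inversion formula on $L^{2}(\Af)$ then yields the desired identity $\widehat{Z}(\xi,t)=\exp(-t\|\xi\|^{\alpha})$.

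Next, for any $f\in L^{2}(\Af)$, the condition $Z(\cdot,t)\in L^{1}(\Af)$ together with Young's inequality gives $T(t)f=Z(\cdot,t)*f\in L^{2}(\Af)$, and the convolution theorem on $\Af$ produces
\begin{equation*}
\mathcal{F}\bigl(T(t)f\bigr)(\xi)=\widehat{Z}(\xi,t)\,\widehat{f}(\xi)=\exp(-t\|\xi\|^{\alpha})\,\widehat{f}(\xi)=\mathcal{F}\bigl(S(t)f\bigr)(\xi).
\end{equation*}
Since $\mathcal{F}$ is an isometric isomorphism of $L^{2}(\Af)$, this forces $T(t)f=S(t)f$ for every $f\in L^{2}(\Af)$ and every $t\geq 0$. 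When $f\in\mathrm{Dom}(D^{\alpha})$, the Hille--Yosida properties recalled in Section \ref{semigroup_operators} guarantee that $t\mapsto S(t)f$ is continuously differentiable into $L^{2}(\Af)$ and satisfies the abstract Cauchy problem (\ref{ACPinAf}); transporting this through the equality $T(t)=S(t)$ gives $u(x,t)=Z(x,t)*f(x)$ as the solution.

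The only mildly delicate point is the application of the convolution theorem in the $L^{1}\times L^{2}$ regime on the locally compact abelian group $\Af$, but this is standard once $Z(\cdot,t)\in L^{1}(\Af)$ is known, so I do not expect any serious obstacle. The substance of the argument is really just the intertwining relation $\mathcal{F}\,S(t)\,\mathcal{F}^{-1}=\exp(-tm^{\alpha})$ already set up in Section \ref{semigroup_operators}, combined with the defining formula for the heat kernel.
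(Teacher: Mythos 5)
Your argument is correct and follows essentially the same route as the paper: both identify $T(t)$ with $S(t)$ by showing that, after Fourier transform, convolution with $Z(\cdot,t)$ becomes multiplication by $\exp(-t\norm{\xi}^{\alpha})$, which is precisely the action of $\F S(t)\F^{-1}$ given by the commutative diagram of Section \ref{semigroup_operators}. The only cosmetic difference is that the paper verifies the convolution identity on the dense subspace $L^{1}(\Af)\cap L^{2}(\Af)$ and then extends by density, whereas you apply the $L^{1}\times L^{2}$ convolution theorem directly to all of $L^{2}(\Af)$.
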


\begin{proof} For $f\in L^1(\Af)\cap L^2(\Af) $,  the convolution  $u(x,t)=Z(x,t) * f(x)$ is in $L^1(\Af)\cap L^2(\Af)$ because $Z(x,t)$ is integrable for $t>0$. Then, the Fourier transform   $\F_{x\mapsto  \xi} u(x,t)$ is equal to 
$$
\hat{f}(\xi)\exp(-t\norm{\xi}^\alpha).
$$
From the commutative diagram above and the last equation, $S(t)(f)=T(t)(f)$.
Since  $L^1(\Af)\cap L^2(\Af)$ is dense in $L^2(\Af)$,
$S(t)(f)=T(t)$, for each $t\geq 0$. 
As a consequence, the function $u(x,t)=Z(x,t) * f(x)$ is a solution of the Cauchy  problem for any $f\in \Dom(\Da)$.
\end{proof}

\subsection[ Markov process on $\Af$]{ Markov process on $\Af$}
\label{markov_process}

In this section the fundamental solution of the heat equation,  $Z(x,t)$,  is shown to be the transition density function of a Markov 
process  on $\Af$. For general information on the theory of Markov process the reader can consult   \cite{Tai} and the classical writing \cite{Dyn}. 

 Let $\B$ denote  the Borel $\sigma$--algebra of $\Af$ and for $B \in \B$ write $1_B$ for the characterisctic or indicator function of $B$. Define 
\[ p(t,x,y) := Z(x-y,t) \quad (t>0,\, x,y\in \Af) \]
and
$$ P(t,x,B) =
\begin{cases}
\int_B p(t,x,y) dy & \text{if}\; t>0, x\in \Af, B\in \B,\\
1_B(x) & \text{if}\; t=0.
\end{cases}
$$

From Theorem \ref{heat-distribution},  it  follows that  $p(t,x,y)$ is a normal transition density and  $P(t,x,B)$ is a normal Markov
transition function on $\Af$ which corresponds to a Markov process on $\Af$ (see \cite[Section 2.1]{Dyn},  for further details).

In order to protray the properties of the path of the associated Markov process we first state the following:

\begin{lemma}\label{integralestimate} 
Let $k$ be any integer,  then
$$
\int_{\Af \backslash B_k}Z(x,t) \leq 1 - \exp(-t e^{\alpha\psi(-k)}).  $$
\end{lemma}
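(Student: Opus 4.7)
The plan is to convert the tail integral into an estimate via the complementary probability and then evaluate the remaining integral using the Fourier representation of $Z(x,t)$ together with the oscillatory integral in Remark \ref{oscillatory_integral}. By Corollary \ref{measure1},
$$\int_{\Af \setminus B_k} Z(x,t)\, dx = 1 - \int_{B_k} Z(x,t)\, dx,$$
so it suffices to show $\int_{B_k} Z(x,t)\, dx \geq \exp(-t e^{\alpha \psi(-k)})$.

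The key computation inserts the defining Fourier integral $Z(x,t) = \int_{\Af} \chi(-x\xi) \exp(-t\norm{\xi}^{\alpha})\, d\xi$ into the integral over $B_k$ and applies Fubini, which is justified because $Z(\cdot,t) \in L^1(\Af)$ (by Corollary \ref{measure1}) and $B_k$ has finite Haar measure. The inner integral is then $\int_{B_k} \chi(-x\xi)\, dx$, which by Remark \ref{oscillatory_integral}(1) equals $e^{\psi(k)}$ when $\norm{\xi} \leq e^{-\psi(k)}$ and $0$ otherwise. Using $\psi(-k) = -\psi(k)$ from the definition of $\psi$ on $\Z$, the condition $\norm{\xi} \leq e^{-\psi(k)}$ becomes $\xi \in B_{-k}$, yielding
$$\int_{B_k} Z(x,t)\, dx = e^{\psi(k)} \int_{B_{-k}} \exp(-t\norm{\xi}^{\alpha})\, d\xi.$$

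For $\xi \in B_{-k}$ one has $\norm{\xi}^{\alpha} \leq e^{\alpha \psi(-k)}$, hence $\exp(-t\norm{\xi}^{\alpha}) \geq \exp(-t e^{\alpha \psi(-k)})$. Since the Haar measure of $B_{-k}$ is $e^{\psi(-k)}$, bounding the integrand from below and using $e^{\psi(k)} \cdot e^{\psi(-k)} = e^{\psi(k) + \psi(-k)} = 1$ gives
$$\int_{B_k} Z(x,t)\, dx \geq e^{\psi(k)} \cdot e^{\psi(-k)} \cdot \exp(-t e^{\alpha \psi(-k)}) = \exp(-t e^{\alpha \psi(-k)}),$$
which upon subtracting from $1$ is precisely the desired inequality. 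I do not anticipate any substantive obstacle; the argument is a direct application of Fubini together with the character integral in Remark \ref{oscillatory_integral} and the elementary lower bound on $\exp(-t\norm{\xi}^\alpha)$ across the ball $B_{-k}$.
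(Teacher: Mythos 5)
Your argument is correct, but it is not the route the paper takes. The paper works entirely on the physical side: it writes $\int_{\Af\setminus B_k}Z(x,t)\,dx$ as a sum over the spheres $S_l$, $l\geq k+1$, substitutes the explicit series for $Z$ from Proposition \ref{freeHeatKernel}, interchanges the two sums, and lets the resulting expression telescope to $1-\exp(-te^{\alpha\psi(-k)})$ (this is the same manipulation as in Corollary \ref{measure1}, just over a tail). You instead reduce to bounding $\int_{B_k}Z(x,t)\,dx$ from below via the total-mass identity, and compute that integral on the Fourier side: Fubini plus Remark \ref{oscillatory_integral}(1) give the exact identity $\int_{B_k}Z(x,t)\,dx=e^{\psi(k)}\int_{B_{-k}}\exp(-t\norm{\xi}^{\alpha})\,d\xi$, after which the elementary bound $\exp(-t\norm{\xi}^{\alpha})\geq\exp(-te^{\alpha\psi(-k)})$ on $B_{-k}$ and $\mu(B_{-k})=e^{\psi(-k)}=e^{-\psi(k)}$ finish the job. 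All the ingredients check out: $\psi(-k)=-\psi(k)$ by the extension of $\psi$ to $\Z$, and the condition $\norm{\xi}\leq e^{-\psi(k)}$ is indeed $\xi\in B_{-k}$. Your approach is arguably cleaner (no double-sum interchange, no telescoping) and makes transparent that the bound measures exactly the defect of $\exp(-t\norm{\xi}^{\alpha})$ from its maximum on $B_{-k}$. One small imprecision: the correct justification for Fubini here is not that $Z(\cdot,t)\in L^1(\Af)$ but that the symbol $\exp(-t\norm{\cdot}^{\alpha})$ is in $L^1(\Af)$ (Lemma \ref{heat-estimatesI}) together with $\mu(B_k)<\infty$, so that the iterated integral of the absolute value of $\chi(-x\xi)\exp(-t\norm{\xi}^{\alpha})$ over $B_k\times\Af$ is finite; this is available in the paper, so it is a matter of citing the right fact rather than a gap.
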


\begin{proof} Similar to Corollary  \ref{measure1}, we have the following 
\begin{align*}
\int_{\Af\backslash B_k} Z(x,t)dx &= \sum_{l=k+1}^{\infty} \int_{S_l}  Z(x,t) dx \\ 
& = \sum_{l=k+1}^{\infty}  Z(e^{\psi(l)} ,t) (e^{\psi(l)} - {e^{\psi(l-1)}})\\
& =\sum_{l=k+1}^{\infty} \left( \sum_{\substack{n\in \Z \\ e^{\psi(n)} \leq e^{-\psi(l)} }} e^{\psi(n)} 
\left\{\exp(-t e^{\alpha\psi(n)}) - \exp(-t e^{\alpha\psi(n+1)}) \right\} \right) (e^{\psi(l)} - {e^{\psi(l-1)}})\\
& =\sum_{n=-\infty}^{-(k+1)} \left( e^{\psi(n)} 
\left\{\exp(-t e^{\alpha\psi(n)}) - \exp(-t e^{\alpha\psi(n+1)}) \right\} \right) \left( \sum_{l=k+1}^{-n}\big(e^{\psi(l)} - {e^{\psi(l-1)}}\big)\right)\\
&\leq\sum_{n=-\infty}^{-(k+1)}  
\left\{\exp(-t e^{\alpha\psi(n)}) - \exp(-t e^{\alpha\psi(n+1)}) \right\}\\
&=1-\exp(-t e^{\alpha\psi(-k)}).
\end{align*}
\end{proof}

\begin{proposition}
\label{condition-L-M}
The transition function $P(t,y,B)$ satisfies the following two conditions:
\begin{enumerate}[a.]
\item For each $s\geq 0$ and  compact subset $B$ of $\Af$
\[ \lim_{x\to \infty} \sup_{t\leq s} P(t,x,B) = 0 \qquad  (\text{Condition LB} ). \]

\item For each $k>0$ and  compact subset $B$ of $\Af$
\[ \lim_{t\to 0+} \sup_{x\in B} P(t,x,\Af\setminus B_k(x)) = 0 \qquad  (\text{Condition MB} ). \]
\end{enumerate}
\end{proposition}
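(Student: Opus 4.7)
My plan is to handle the two conditions separately, since each follows quite directly from one of the lemmas already established for the heat kernel. For Condition LB I will rely on the pointwise upper bound from Lemma \ref{heat-estimatesII}, and for Condition MB I will use the translation invariance of $\Af$ together with the tail integral estimate of Lemma \ref{integralestimate}.

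For Condition LB, the key observation is that if $B$ is compact then $B \subset B_M$ for some $M \in \Z$, and whenever $\norm{x} > e^{\psi(M)}$ the ultrametric inequality forces $\norm{x-y} = \norm{x}$ for every $y \in B$. Lemma \ref{heat-estimatesII} then gives
$$ Z(x-y,t) \leq \norm{x}^{-1}\bigl(1-\exp(-te^{\alpha\psi(\ord(x)+1)})\bigr) \leq \norm{x}^{-1}, $$
a bound independent of $t$ and of $y\in B$. Integrating over $B$ yields $P(t,x,B)\leq \norm{x}^{-1}\mathrm{meas}(B)$, and since $x\to \infty$ in $\Af$ is synonymous with $\norm{x}\to\infty$ (compact subsets of $\Af$ are bounded in the ultrametric), this supplies the desired limit uniformly in $t\leq s$.

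For Condition MB, the crucial point is that $B_k$ is a subgroup of $\Af$, hence closed under negation. The change of variables $z = x-y$ in
$$ P(t,x,\Af\setminus B_k(x)) = \int_{\Af\setminus (x+B_k)} Z(x-y,t)\,dy $$
produces $\int_{\Af\setminus B_k} Z(z,t)\,dz$, an expression that does not depend on $x$. Lemma \ref{integralestimate} then bounds this by $1-\exp(-te^{\alpha\psi(-k)})$, which tends to $0$ as $t\to 0^+$; the supremum over $x\in B$ is therefore trivial.

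There is no substantive obstacle: both conditions reduce, after translation invariance and the ultrametric inequality are used, to bounds already established for $Z(x,t)$. The only delicate point is to confirm that $x\to \infty$ must be read as $\norm{x}\to\infty$ in the locally compact (and non--compact) ring $\Af$, after which the argument is entirely routine.
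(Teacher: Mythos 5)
Your proof is correct and follows essentially the same route as the paper: Condition LB via the pointwise bound of Lemma \ref{heat-estimatesII} integrated over the compact set $B$ (the paper phrases this with $\mathrm{dist}(x,B)$ where you use $\norm{x}$ and the ultrametric equality $\norm{x-y}=\norm{x}$, a harmless simplification that also discards the unneeded exponential factor), and Condition MB via translation invariance reducing to $\int_{\Af\setminus B_k}Z(z,t)\,dz$ and Lemma \ref{integralestimate}. No gaps.
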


\begin{proof}
Let $d(x):=dist(x,B)=e^{\psi(-m_x)}$ where $m_x\in \Z$. From Lemma 
\ref{heat-estimatesII} it follows that
$$Z(x-y,t)\leq [d(x)]^{-1}\left(1-\exp(-s e^{\alpha \psi(m_x+1)})\right)$$ for any $y\in B$ and $t\leq s$. 
Since $B$ is compact and $\alpha$ is positive, $d(x)^{-1} \To 0$  and  $e^{\alpha \psi(m_x+1)}\To 0$, as $x\to \infty$. Hence
\[ P(t,x,B)\leq [d(x)]^{-1}\left(1-\exp(-s e^{\alpha \psi(m_x+1)})\right) \mu(B)\To 0 \] 
as $x\to \infty$. This implies Condition $L(B)$.

Presently, we establish Condition $M(B)$: for $y\in \Af \setminus B_k(x)$, we have  $\norm{x-y}> e^{\psi(k)}$. Therefore
$$
P(t,x,\Af \setminus B_k(x))=\int_{\Af \setminus B_k(x)} Z(x-y,t)dy  = \int_{\Af \setminus B_k(0)} Z(y,t)dy.
$$
From Lemma \ref{integralestimate}, 
$$
\int_{\Af \setminus B_k} Z(y,t)dy \leq 1-\exp(-te^{\alpha \psi(-k) } ) \To 0,\;  t\to 0^{+}.
$$ 
Since $P(t,x,B_k(x))$ is invariant under additive traslations, the last equation implies  
Condition $M(B)$.

\end{proof}

\begin{theorem} 
The heat kernel $Z(x,t)$ is the transition density of a time and space homogeneous Markov process which is bounded, right--continuous and has no discontinuities other than jumps.
\end{theorem}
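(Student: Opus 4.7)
The plan is to invoke a classical existence and regularity theorem for Markov processes associated with a normal transition function on a locally compact second countable Hausdorff space, exactly in the form stated in Dynkin's monograph \cite{Dyn}. The main hypotheses of that theorem are already in hand: $P(t,x,B)$ has been verified to be a normal Markov transition function, and Proposition \ref{condition-L-M} supplies the two regularity conditions (LB) and (MB), which together (in Dynkin's terminology) guarantee the existence of a Markov process whose sample paths are right-continuous and possess only discontinuities of jump type.

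First I would record time and space homogeneity. Time homogeneity is immediate from the Markovian identity in Proposition \ref{heat-distribution}, which gives the Chapman--Kolmogorov relation $P(t+s,x,B)=\int P(s,y,B)\,P(t,x,dy)$ with no explicit dependence on a starting time. Space homogeneity follows from the definition $p(t,x,y)=Z(x-y,t)$, because the Haar measure $dy$ on $\Af$ is translation invariant, so $P(t,x+h,B+h)=P(t,x,B)$ for every $h\in\Af$, $B\in\B$.

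Next I would cite the Dynkin construction: since $\Af$ is second countable, locally compact and totally disconnected, and since conditions (LB) and (MB) hold for every compact $B\subset\Af$ by Proposition \ref{condition-L-M}, \cite[Thm.~3.10 and the surrounding discussion in \S3.2]{Dyn} supplies a strong Markov process $X_t$ on $\Af$ whose transition function is precisely $P(t,x,\cdot)$ and whose trajectories are almost surely right-continuous with left limits and have no discontinuities other than jumps. Boundedness of the paths on every compact time interval is then a direct consequence of Condition (LB): for every compact $B$ and every $s\geq 0$, $P(t,x,B)\to 0$ as $x\to\infty$ uniformly in $t\leq s$, which prevents the process from escaping to infinity in finite time.

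The only place where some care is needed is in checking that the conditions (LB) and (MB) as proved in Proposition \ref{condition-L-M} match verbatim the hypotheses in the version of Dynkin's theorem we are citing; the formulations chosen in the preceding proposition were tailored for exactly this purpose, so this is a routine verification rather than a substantive obstacle. The main non-trivial input — the quantitative bound of Lemma \ref{heat-estimatesII} and the tail estimate of Lemma \ref{integralestimate}, both of which enter the proof of Proposition \ref{condition-L-M} — has already been carried out, so here we only need to assemble the pieces and quote the theorem.
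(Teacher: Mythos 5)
Your proposal is correct and follows essentially the same route as the paper: the paper's proof likewise just assembles the normal transition function $P(t,x,B)$, invokes Proposition \ref{condition-L-M} for conditions (LB) and (MB), and cites Dynkin's existence theorem for second countable locally compact spaces (the paper references \cite[Theorem 3.6]{Dyn} rather than Theorem 3.10, but that is the same device). Your added remarks on time and space homogeneity via translation invariance of the Haar measure are implicit in the paper's definitions and do not change the argument.
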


\begin{proof}
The result follows from Proposition \ref{condition-L-M}  and the fact that $\Af$ is a second countable and locally compact  ultrametric space (see \cite[Theorem 3.6]{Dyn}). 
\end{proof}

\section[Cauchy problem for parabolic type equations on $\A$]{Cauchy problem for parabolic type equations on $\A$}
\label{ParabolicEquationsonA}

In this section an abstract Cauchy problem on $L^2(\A)$ is presented. First, we recollect several properties of the ring of ad\`eles 
$\A$. The abstract Cauchy problem on $L^2(\A)$ is studied by considering the fractional Laplacian   on the Archimedean completion, $\R$, and the pseudodifferential operator on $L^2(\Af)$, studied in the previous section. 

\subsection[The ring of ad\`eles $\A$]{The ring of ad\`eles $\A$}
\label{ring_adeles} 
In the present section, the ring of ad\`eles $\A$ of $\Q$ is described  as the product of its Archimedean part with its non--Archimedean component. We first consider  the locally compact and complete Archimedean field of real numbers $\R$. 

\subsubsection[The Archimedean place]{The Archimedean place} 
\label{analysis_R}
Recall that the real numbers $\R$ is the unique Archimedean completion of the rational numbers. As a locally compact Abelian group, $\R$, is autodual with pairing function given by $\chi_\infty(\xi_\infty x_\infty)$, where $\chi_{\infty}(x_{\infty})=e^{-2\pi i x_{\infty}}$ is the canonical character on $\R$. In addition, it is a commutative Lie group. The Schwartz space of $\R$, which we denote here by $\D(\R)$, consists of functions 
$\varphi_\infty : \R \To \C$ which are infinitely differentiable and rapidly decreasing. 
$\D(\R)$ has a countable family of seminorms which makes it a nuclear Fr\'echet space. Let $dx_{\infty}$ denote the usual Haar measure on $\R$. The Fourier transform 
$$
\F_{\infty}[\varphi_\infty](\xi_\infty) = 
\int_\R \varphi_\infty(x_\infty) \chi_\infty(\xi_\infty x_\infty) dx_\infty 
$$
is an isomorphism from $\D(\R)$ onto itself. Moreover, the Fourier inversion formula and the Parseval--Steklov identities hold on $\D(\R)$. Furthermore, $L^2(\R)$ is a separable Hilbert space, the Fourier transform is an isometry on $L^2(\R)$, and the Fourier  inversion formula and the Parseval--Steklov identity hold on $L^2(\R)$.

\begin{definition}
The \emph{\textsf{ad\`ele ring}} $\A$ of $\Q$ is defined as $\A = \R\times \Af$. 
\end{definition}

With the product topology, $\A$ is a second countable locally compact Abelian topological ring. If $\mu_\infty$ is the Haar measure on $\R$ and $\mu_{f}$ denotes the Haar measure on $\Af$, a Haar measure on $\A$ is given by the product measure 
$\mu = \mu_\infty \times \mu_{f}$.  Recall that, if $\chi_\infty$ and $\chi_f$ are the canonical characters on $\R$ and $\Af$, respectively, then $\chi = (\chi_\infty, \chi_f)$ defines a canonical character on $\A$. $\A$ is a selfdual group in the sense of Pontryagin and we have a paring $\chi_\infty(x_\infty\xi_\infty)$.

\subsubsection{Bruhat-Schwartz space} For any $\varphi_\infty \in \D(\R)$ and $\varphi_f \in \D(\Af)$, we have a function  $ \varphi$   on $\A$ given by  \[ \varphi(x) = \varphi_\infty(x_\infty)  \varphi_f(x_f) \]
for any ad\`ele $x=(x_\infty,x_f)$. These functions are  continuous on $\A$ and the  linear vector space generated by these functions is  linearly isomorphic to the algebraic tensor product $\D(\R) \otimes \D(\Af)$. In the following, we identify these spaces and write 
$\varphi=\varphi_\infty \otimes \varphi_f$.

Since  $\A$ is a locally compact  Abelian topological group  the  Bruhat--Schwartz space $\D(\A)$ has natural topology   described as follows. First, recall that $\D^l_{k}(\Af)$ denotes the set functions with  support on $B_k\subset \Af$  and parameter of constancy $l$. We have the  algebraic and topological tensor product  of a Fr\'echet space and finite dimensional space, given by
$$
\D(\R)\otimes\D_k^l(\Af)
$$
which represents a well defined class of functions on 
$\A$. These topological vector spaces are nuclear Fr\'echet, since $\D(\R)$ is nuclear Fr\'echet and $\D_k^l(\Af)$ has finite dimension. We have  
$$
\D(\A)= \varinjlim_{l \leq k} \D(\R)\otimes\D_k^l(\Af).
$$
The space of \emph{\textsf{Bruhat--Schwartz functions}} on $\A$ is the  algebraic  and topological tensor product of nuclear space  
vector  spaces $\D(\R)$ and $\D(\Af)$, i.e.
\[ \D(\A) = \D(\R) \otimes \D(\Af). \]

\subsubsection{The Fourier transform on $\A$}

The Fourier transform on $\D(\A)$ is defined  as
\[ \F[\varphi](\xi) = \int_{\A} \varphi(x) \chi(\xi x) dx, \]
for any $\xi\in \A$. 
It is well--defined on $\D(\A)$ and  for any function of the form 
$\varphi=\varphi_\infty \otimes \varphi_f$ it is given by
$$
\F[\varphi](\xi) = \F_\infty[\varphi_{\infty}](\xi_\infty)\otimes \F_{f}(\varphi_f)(\xi_f) \qquad (\xi=(\xi_\infty,\xi_f) \in \A)
$$ 
where $\F_{\infty}$ and $\F_f$  are the Fourier transforms on $\D(\R)$ and $\D(\Af)$, respectively. In other words, we have $\F_{\A} = \F_\R \otimes \F_{\Af}$.

The Fourier transform $\F:\D(\A)\To \D(\A)$ is a linear and continuous isomorphism. The inversion formula on $\D(\A)$ reads as
\[ \F^{-1}[\varphi](\xi) = \int_{\A} \widehat{\varphi}(-\xi) \chi(\xi x) d\xi, \qquad 
(\xi \in \A), \]
and  Parseval--Steklov equality as
$$
\int_{\A} \varphi(x) \overline{\psi(x)}dx = \int_{\A} \widehat{\varphi}(\xi) \overline{\widehat{\psi}(\xi)} d\xi.
$$ The space of square integrable functions $L^2(\A)$ on $\A$ is a separable Hilbert space since it is the Hilbert tensor product space $ L^2(\A) \cong L^2(\R) \otimes L^2(\Af)$. 
The Fourier transform $\F:L^2(\A)\To L^2(\A)$ is an isometry. The Fourier inversion formula and the Parseval-Steklov identity hold.

\subsection{A Cauchy Problem on  $L^2(\A)$} In this paragraph we consider a parabolic type equation on the complete ring of adeles.

\subsubsection{Archimedean heat kernel}
Let us recall the theory of the fractional heat kernel on the real line. For a complete review of this topic the reader may consult \cite{DGV} and the references therein.
For any $0 < \beta \leq 2$,  the fractional Laplatian $\Db_\infty:\Dom(A) \subset L^2(\R)\To L^2(\R)$ is given by $$ \Db_\infty \phi(x_{\infty})= \F^{-1}_{\xi_{\infty} \to x_{\infty}}\left[\abs{\xi}^\beta_{\infty} \F_{x_{\infty} \to \xi_{\infty}}[f]\right], $$
for any $\phi$ in the domain
$$
\Dom(\Db_\infty):=\left\{\, f \in L^2(\R) : \, \abs{\xi}_{\infty}^{\beta}\widehat{f} \in L^2(\R)  \,\right \}.
$$ 
Similar to the case of the finite ad\`ele ring, the operator $\Db_\R \phi(x_{\infty})$ is diagonalized by the unitary Fourier 
transform: if $\mb_{\infty}$ denotes the multiplicative operator on $L^2(\R)$ given by  
$f(\xi) \longmapsto  \abs{\xi}_{\infty}^\beta f(\xi)$, with domain $
\Dom(m^{\beta}_{\infty}):=\left\{\, f \in L^2(\R) : \, \abs{\xi}_{\infty}^{\beta}\widehat{f}(\xi) \in L^2(\R)  \,\right \} 
$, then    the following diagram commutes:
\begin{equation}\label{DiagramaConmutativo}
\begin{CD}
L^{2}(\R) @>\mathcal{F}>>L^{2}(\R) \\
@V{ \Db_{\infty}  }VV @VV{ m^\beta_{\infty}  }V\\
L^{2}(\R) @>\mathcal{F}>> L^{2}(\R). 
\end{CD}
\end{equation}

The pseudodifferential equation 
\begin{equation}
\label{HeatEquation}
\begin{cases}
\frac{{\partial}u(x_{\infty},t)}{{\partial t}} + \Db_{\infty} u(x_{\infty},t)=0,  \ x_{\infty} \in \R, \ t \geq 0;   \\
u(x,t)=f(x), \qquad  f \in \Dom(\Db_\infty) 
\end{cases}
\end{equation} is an abstract Cauchy problem whose  solution is given by the convolution of $f$ with the \emph{Archimedean heat kernel}:
$$
Z_{\infty}(x_\infty,t)=\int_{\R}\chi_{\infty}(\xi_{\infty} x_{\infty}) e^{-t\abs{\xi_{\infty}}^\beta}d\xi_{\infty}    \qquad (t >0).
$$
For $0 <\beta \leq 2$, the following bound holds
$$
\abs{Z_\infty(x_\infty,t)} \leq\frac{Ct^{1/\beta} }{t^{2/\beta}+x_\infty^2} \qquad( \text{for } t>0,\, x_\infty \in \R).
$$

Due to this bound, the Archimedean heat kernel satifies several properties: it is the distribution of a probability measure on $\R$; it converges to the Dirac delta distribution as $t$ tends to zero, and it satisfies the Markovian property.
Therefore the Archimedean heat kernel is the transition density of
a time and space homogeneous Markov process which is bounded, right--continuous and has no discontinuities other than jumps (see \cite[Section 2]{DGV}).

Moreover, the formula
$$S_{\infty}( f)=  f(x_\infty) \ast Z_{\infty}(x_\infty,t)  \qquad (f \in L^2(\R))$$  
defines a strongly continuous contraction
semigroup with  the unbounded operator  $\big(\Db_\infty, \Dom(\Db_\infty)\big)$ as infinitesimal generator.

Furthermore, there is a commutative diagram \begin{equation}\label{DiagramaConmutativoSemigrouposenA}
\begin{CD}
L^{2}(\R) @>\mathcal{F}>>L^{2}(\R) \\
@V{\exp(-t\Db_{\infty})  }VV @VV{ \exp(-t\mb_{\infty}) }V\\
L^{2}(\R) @>\mathcal{F}>> L^{2}(\R). 
\end{CD}
\end{equation}
where $\exp(-t\mb_{\infty})$ is the $C_0$--semigroup of contractions whose infinitesimal generator corresponds to the operator $-\mb_{\infty}$, under the Hille--Yoshida Theorem.

\subsubsection{Tensor product of  operators}
Let us briefly recall the definition of tensor product of operators on the Hilbert space $L^2(\mathbb{A})=L^2(\Af)\otimes L^2(\mathbb{R})$ (see \cite[Chapter VIII]{RS} for complete detail).

Given two (unbounded) closable 	 operators $(A, \Dom(A))$ and  $(B, \Dom(B))$ on  $L^2(\Af)$ and $L^2(\mathbb{R})$, respectively, the algebraic tensor product  $$\Dom(A) \otimes \Dom (B)=\left\{\, \sum_{\substack{\text{finite} }} \lambda_{i} \phi_f^i \otimes \phi_\infty^i \,:\, \phi_f^i \in \Dom(A), \, \phi_\infty^i \in \Dom(B) \,\right\} \subset L^2(\mathbb{A}) $$
 is dense in $L^2(\mathbb{A})$, and the operator  $A \otimes B$  given by 
 $$A \otimes B(\phi_f \otimes \phi_\infty )=A(\phi_f) \otimes B(\phi_\infty), $$
for $\phi_f \otimes \phi_\infty \in  \Dom(A) \otimes \Dom (A)$, is closable.

The \emph{tensor product} of $A$ and   $B$ is  the closure of the operator  $A \otimes B$ defined on the algebraic tensor product 
$\Dom(A) \otimes \Dom (B)$. We denote the closed operator by $A \otimes B$ and its domain by  $\Dom(A \otimes B)$.  Furthermore,
if $A$ and $B$ are selfadjoint, their tensor product $A\otimes B$ is essentially selfadjoint and the spectrum   $\sigma(A\otimes B)$ 
of $A\otimes B$ is the closure in $\C$ of $\sigma(A)\sigma(B)$, where $\sigma(A)$ and $\sigma(B)$ are the corresponding   spectrum of
$A$ and $B$.

On the other hand, if $A$ and $B$ are bounded operators, their tensor product $A\otimes B$ is bounded with operator norm 
$$\norm{A\otimes B}_{L^2(\A)}=\norm{A}_{L^2(\Af)} \norm{ B}_{L^2(\R)}.$$

Now, let us recall the definition of  the sum  of unbounded operators
on the Hilbert space $L^2(\A)=L^2(\Af)\otimes L^2(\mathbb{R})$ given by $A+B = A \otimes I + I \otimes B$. Once more, the algebraic tensor product $\Dom(A) \otimes \Dom(B ) \subset L^2(\A)$
is dense in $L^2 (\A)$ and the operator $A+B =A \otimes I + I\otimes B$ given by
$$
(A + B )(\phi_f \otimes \phi_\infty ) = A(\phi_f ) \otimes \phi_\infty + \phi_f\otimes B (\phi_\infty),$$
with $\phi_f \otimes \phi_\infty \in \Dom(A) \otimes \Dom(B)$ is essentially selfadjoint. The sum of $A$ and $B$ is the
closure of the operator $A + B$ defined on $Dom(A) \otimes Dom(B)$. We denote by $\Dom(A+B)$ the
domain of the this closed unbounded operator and with abuse of notation we denote this unbounded operator by  $A+B$. The
spectrum of $\sigma(A+B)$ of $A+B$ is the closure in $\C$ of $\sigma(A ) + \sigma(\Db )$, where $\sigma(A)$ and $\sigma(B)$ are the
corresponding spectrum of $A$ and $B$, respectively.

\subsubsection{Pseudodifferential operators on $\A$} 
First, notice that the multiplicative  operator   $\widehat{\mab}: L^2(\A) \To L^2(\A) $, given by $f(\xi) \longmapsto (\norm{\xi_f}^\alpha+\abs{\xi_\infty}^\beta) f(\xi)$,  with (dense) domain 
\begin{equation*}
\Dom(\widehat{\mab}):=\left\{\, f \in L^2(\A) \,: \,  \big(\norm{\xi_f}^\alpha+\abs{\xi_{\infty}}^\beta\big) \widehat{f}(\xi) \in L^2(\A)  \,\right \} 
\end{equation*} is selfadjoint and coincides with 
$\mab=\ma+\mb = \ma \otimes I + I \otimes \mb$ 
 on the set $\Dom(\ma ) \otimes \Dom(\mb ) \subset L^2(\A)$. Since $\mab$ is essentially selfadjoint on the domain $\Dom(\ma ) \otimes \Dom(\mb )$ it follows that  $\mab= \widehat{\mab}$.

For any $0<\alpha$ and $0 <\beta \leq 2$, consider the   pseudodifferential operator 
$\widehat{\Dab}:\Dom(\widehat{\Dab}) \subset L^2(\A)\To L^2(\A)$ defined by the formula
$$ \widehat{\Dab} \phi(x)= \F^{-1}_{\xi \to x}[\mab \F_{x \to \xi}[\phi]], $$
for any $\phi$ in the domain
\begin{equation*}
\Dom(\widehat{\Dab}):=\left\{\, f \in L^2(\A) : \, \mab(\widehat{\phi}) \in L^2(\A)  \,\right \}. 
\end{equation*} 
This unbounded operator is a positive selfadjoint operator which is  diagonalized by the (unitary) Fourier
transform $\F$, i.e.  the following diagram commutes:
\begin{equation}\label{DiagramaConmutativoenA}
\begin{CD}
L^{2}(\A) @>\mathcal{F}>>L^{2}(\A) \\
@V{\widehat{\Dab}  }VV @VV{ \mab=\widehat{\mab}  }V\\
L^{2}(\A) @>\mathcal{F}>> L^{2}(\A), 
\end{CD}
\end{equation}

Therefore, the operator  $\Dab=\Da+\Db = \Da \otimes I + I \otimes \Db$, which is essentially  selfadjoint over the domain  $\Dom(\Da ) \otimes \Dom(\Db ) \subset L^2(\A)$, is equal to the operator  $\widehat{\Dab}$.

\subsubsection{A heat equation on $\A$}
For  $f(x) \in \Dom(\Dab) \subset L^2(\A)$, consider  the abstract Cauchy problem
\begin{equation}
\label{ACPinA}
\begin{cases}
\frac{{\partial}u(x,t)}{{\partial t}} + \Dab u(x,t)=0,  \ x \in \A, \ t \geq 0   \\
\ u(x,t)=f(x).
\end{cases}
\end{equation} 
As mentioned above,  a function $u:\A\times [0,\infty) \To \C$ is called  a \textsf{solution} of the abstract Cauchy problem (\ref{ACPinA}) in the Hilbert space  $L^2(\A)$, if: 
\begin{enumerate}[a.]
\item $u:[0,\infty) \To L^2(\A)$ is a continuously differentiable function on the sense of Hilbert spaces,
\item $u(x,t)\in \Dom(\Dab)$, for all $t\geq 0$ and,
\item $u(x,t)$ is a solution of the initial value problem. 
\end{enumerate}
Furthermore,  this abstract Cauchy  problem is well posed and  its  solution is given by a strongly continuous contraction semigroup.  From the Hille--Yoshida theorem,  the  unbounded operator $-\Dab$ is the infinitesimal 
generator of a strongly continuous contraction semigroup $S_\A(t)=\exp(-t\Dab)$. 
Additionally, to the unbounded operator $-\mab$ there corresponds a strongly continuous contraction semigroup $\exp(-t\mab)$ with $\mab$ as infinitesimal generator.

From an argument as in \ref{semigroup_operators}, there is a commutative diagram:

\begin{equation}\label{DiagramaConmutativoSemigrouposenA}
\begin{CD}
L^{2}(\A) @>\mathcal{F}>>L^{2}(\A) \\
@V{\exp(-t\Dab)  }VV @VV{ \exp(-t\mab) }V\\
L^{2}(\A) @>\mathcal{F}>> L^{2}(\A). 
\end{CD}
\end{equation}

In order to describe the solution of problem (\ref{ACPinA}), for fixed $\alpha>0$ and $0<\beta \leq 2$, we define the \emph{adelic
heat kernel}  as 
$$
 Z_{\A}(x,t)=\int_{\A}\chi(-\xi x) e^{-t( \norm{\xi_f}^\alpha +\abs{\xi_{\infty}}^\beta)}d\xi    \qquad (t >0, \, x, \xi \in \A ),
$$ where  $\xi=(\xi_f,\xi_\infty)$. That is to say,
\begin{align}
Z_{\A}(x,t)&=\F^{-1}_{\A}(e^{-t( \norm{\xi_f}^\alpha +\abs{\xi_{\infty}}^\beta)}) \notag \\
&=\F^{-1}_{\infty}(e^{-t\abs{\xi_{\infty}}^\beta} )\F^{-1}_{f}(e^{-t\norm{\xi_{f}}^\alpha} ) \notag\\
&=Z_{f}(x_f,t) \otimes Z_{\infty}(x_\infty,t). \notag
\end{align}

\begin{proposition}
\label{heat-distribution-on-A}
The adelic  heat kernel, $Z_{\A}(x,t)$, satisfies the following properties:
\begin{itemize}
\item It is the distribution of a probability measure on $\Af$, i.e. $Z_\A(x,t)\geq 0$ and
\[ \int_{\A} Z_\A(x,t)dx = 1, \] 
for all $t>0$. 
 
\item It converges to the Dirac distribution as $t$ tends to zero:
\[ \lim_{t \to 0} \int_{\A} Z_{\A}(x,t) f(x) dx = f(0), \] 
for all $f\in \D(\A)$.
\item It has the Markovian property:
\[ Z(x,t+s) = \int_{\A} Z_{\A}(x-y,t) Z(y,s) dy. \]
\end{itemize}
\end{proposition}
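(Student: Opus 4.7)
The plan is to reduce each of the three assertions to the corresponding property of the two factors $Z_f(x_f,t)$ and $Z_\infty(x_\infty,t)$, exploiting the factorization
\[ Z_\A(x,t) = Z_f(x_f,t) \, Z_\infty(x_\infty,t) \]
and the product structure $\A=\R\times\Af$, $dx=dx_\infty\,dx_f$, $\chi=(\chi_\infty,\chi_f)$. The results for the finite--adelic factor are supplied by Proposition \ref{heat-distribution} and its corollary, while the results for the Archimedean factor are the classical properties of the fractional heat kernel recollected in the Archimedean heat kernel subsection.

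For the first bullet, positivity of $Z_\A(x,t)$ follows immediately from positivity of each factor (the finite--adelic factor is positive by Proposition \ref{freeHeatKernel}, and the Archimedean fractional heat kernel is well known to be positive). Fubini's theorem applied to the non--negative integrand yields
\[ \int_{\A} Z_\A(x,t)\,dx = \Bigl(\int_{\Af} Z_f(x_f,t)\,dx_f\Bigr)\Bigl(\int_{\R} Z_\infty(x_\infty,t)\,dx_\infty\Bigr) = 1\cdot 1 = 1. \]

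For the second bullet, I would first verify the claim on elementary tensors $f=\varphi_\infty\otimes\varphi_f$ with $\varphi_\infty\in\D(\R)$ and $\varphi_f\in\D(\Af)$. In that case Fubini gives
\[ \int_{\A} Z_\A(x,t) f(x)\,dx = \Bigl(\int_{\Af} Z_f(x_f,t)\varphi_f(x_f)\,dx_f\Bigr)\Bigl(\int_{\R} Z_\infty(x_\infty,t)\varphi_\infty(x_\infty)\,dx_\infty\Bigr), \]
and both factors converge as $t\to 0^+$ to $\varphi_f(0)$ and $\varphi_\infty(0)$ respectively by Proposition \ref{heat-distribution} and the Archimedean analogue. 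A uniform $L^1$ bound (each factor is a probability density) makes the product of the limits equal to the limit of the product. Since $\D(\A)=\D(\R)\otimes\D(\Af)$ every $f\in\D(\A)$ is a finite linear combination of such elementary tensors, so the statement extends by linearity. This is the step most prone to bookkeeping errors, but it is not essentially difficult.

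For the Markov property I would again use Fubini, splitting the convolution integral on $\A$ as iterated integrals over $\Af$ and $\R$:
\begin{align*}
\int_{\A} Z_\A(x-y,t)Z_\A(y,s)\,dy &= \int_{\Af}\int_{\R} Z_f(x_f-y_f,t)Z_\infty(x_\infty-y_\infty,t)Z_f(y_f,s)Z_\infty(y_\infty,s)\,dy_\infty\,dy_f\\
&= \Bigl(\int_{\Af} Z_f(x_f-y_f,t)Z_f(y_f,s)\,dy_f\Bigr)\Bigl(\int_{\R} Z_\infty(x_\infty-y_\infty,t)Z_\infty(y_\infty,s)\,dy_\infty\Bigr).
\end{align*}
Applying the Markovian property of each factor collapses this to $Z_f(x_f,t+s)Z_\infty(x_\infty,t+s)=Z_\A(x,t+s)$. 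The interchange of order is legitimated by non--negativity of the integrand, once more via Tonelli, which is the same observation that underlies all three bullets; accordingly I do not expect a genuine obstacle, only the need to keep the product structure of measures, characters and Fourier transforms straight throughout.
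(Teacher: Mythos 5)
Your proposal is correct and follows essentially the same route as the paper: everything is reduced to the factorization $Z_{\A}(x,t)=Z_f(x_f,t)\otimes Z_\infty(x_\infty,t)$ together with the already-established properties of the finite-adelic kernel (Proposition \ref{heat-distribution}) and of the Archimedean fractional heat kernel, with your elementary-tensor-plus-linearity argument simply making explicit the step the paper leaves implicit for the Dirac-delta limit. The only (immaterial) divergence is in the Markov property, where the paper invokes the Fourier inversion formula and $e^{-(t+s)\lambda}=e^{-t\lambda}e^{-s\lambda}$ directly on $\A$, whereas you factor the convolution via Tonelli and apply the factor-wise Markov properties; both rest on the same underlying facts.
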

\begin{proof}From the equality  $Z_{\A}(x,t)
=Z_{\Af}(x_f,t) \otimes Z_{\R}(x_\infty,t)$ it follows that
$Z_\A(x,t)$ is in $L^1(\A)$ for any $t>0$, and also \[ \int_{\A} Z_\A(x,t) dx = 1. \] 
Using the corresponding properties of the Arquimedian heat kernel and the finite adelic heat kernel, for $f\in \D(\A)$, we have
\[ \lim_{t \to 0} \int_{\A} Z_\A(x,t) \big(f(x)-f(0)\big) dx = 0. \]
The Markovian property follows from the Fourier inversion formula and the related property  of the exponential function.
\end{proof}

Now, for any $f\in L^2(\A)$, define
$$
T_\A(t)(f)(x)=\begin{cases} 
Z_{\A}(x,t)\ast f(x)    & t>0, \\
f(x) & t=0.
\end{cases}
$$ 
From Proposition \ref{heat-distribution-on-A} and Young's inequality it follows that $\{T_\A(t)\}_{t\geq 0}$ is a strongly continuous contraction semigroup.
On the other hand, from definition, it follows that
$$S_{\A}(t)(\phi_{f} \otimes \phi_{\infty})=\left( Z_{\Af}(x_{f},t)\ast \phi_{f} \right) \otimes \left( Z_{\R}(x_\infty,t)
\ast\phi_{\infty} \right). $$

 \begin{theorem}\label{solution_heatequation} If $f$ is any complex valued square integrable function on  $\Dom(\Dab)$, then the Cauchy problem
\begin{equation*}
\begin{cases}
\frac{{\partial}u(x,t)}{{\partial t}} + \Dab u(x,t) = 0,  \ x \in \A, \ t > 0,  \\  u(x,t) = f(x)
\end{cases}
\end{equation*} 
has a classical solution $u(x,t)$ determined by the convolution of $f$ with the
heat kernel $Z_{\A}(x,t)$. Moreover, $Z_{\A}(x,t)$ is the transition density  of a time and space homogeneous Markov process which is bounded, right-continuous and has no discontinuities other than jumps.
 \end{theorem}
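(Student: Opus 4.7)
The plan is to mirror the proof of Theorem \ref{maintheorem} on $\Af$, systematically exploiting the tensor product structure $L^2(\A) = L^2(\R) \otimes L^2(\Af)$ together with the factorization $Z_\A(x,t) = Z_{\Af}(x_f,t)\, Z_\R(x_\infty,t)$ already recorded in the definition of the adelic heat kernel. This reduces the full adelic problem to properties previously established separately on each factor.

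For the Cauchy problem, I would first show $T_\A(t) = S_\A(t)$ on $L^2(\A)$. For $f \in L^1(\A) \cap L^2(\A)$ the convolution $Z_\A(\cdot,t) * f$ lies in $L^1(\A) \cap L^2(\A)$, because $Z_\A(\cdot,t)$ is integrable by Proposition \ref{heat-distribution-on-A}. Applying $\F_\A = \F_\R \otimes \F_{\Af}$ yields
$$\F_\A\bigl[Z_\A(\cdot,t)*f\bigr](\xi) = \hat f(\xi)\exp\bigl(-t(\|\xi_f\|^\alpha + |\xi_\infty|^\beta)\bigr),$$
which coincides with $\F_\A[S_\A(t)f]$ by diagram \eqref{DiagramaConmutativoSemigrouposenA}. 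Plancherel gives equality on the dense subspace $L^1(\A) \cap L^2(\A)$, and boundedness of both operators extends it to all of $L^2(\A)$. The Hille--Yoshida theorem then guarantees that for $f \in \Dom(\Dab)$ the map $t \mapsto Z_\A(\cdot,t) * f$ is continuously differentiable in $L^2(\A)$ and solves the Cauchy problem.

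For the Markov process I would set $p_\A(t,x,y) := Z_\A(x-y,t)$ and
$$P_\A(t,x,B) = \int_B p_\A(t,x,y)\,dy\quad (t>0), \qquad P_\A(0,x,B) = 1_B(x).$$
Proposition \ref{heat-distribution-on-A} shows that $p_\A$ is a normal transition density and $P_\A$ a normal Markov transition function on $\A$. To obtain path regularity I invoke Dynkin's theorem \cite[Theorem 3.6]{Dyn}, which requires verifying the adelic analogues of Conditions LB and MB from Proposition \ref{condition-L-M}.

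This verification is the main obstacle, but the product structure reduces it to the two known cases. Every compact $B \subset \A$ is contained in a product $B_\infty \times B_f$ of compacta, and by Fubini combined with the factorization of $Z_\A$,
$$P_\A(t,x,B) \leq P_\R(t,x_\infty,B_\infty)\,P_{\Af}(t,x_f,B_f).$$
Since $x \to \infty$ in $\A$ forces $|x_\infty| \to \infty$ or $\|x_f\| \to \infty$, the corresponding factor tends to zero uniformly for $t \leq s$, either by the LB property on $\R$ (from the Archimedean estimate $|Z_\R(x_\infty,t)| \leq Ct^{1/\beta}/(t^{2/\beta} + x_\infty^2)$) or by the LB property on $\Af$ in Proposition \ref{condition-L-M}, while the other factor is bounded by one. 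For MB, any adelic ball around $x$ contains a product $B_{r_\infty}^\R(x_\infty) \times B_{r_f}^{\Af}(x_f)$, so
$$P_\A\bigl(t,x,\A \setminus (B_{r_\infty}^\R(x_\infty) \times B_{r_f}^{\Af}(x_f))\bigr) \leq P_\R\bigl(t,x_\infty,\R\setminus B_{r_\infty}^\R(x_\infty)\bigr) + P_{\Af}\bigl(t,x_f,\Af\setminus B_{r_f}^{\Af}(x_f)\bigr),$$
and each summand tends to zero as $t \to 0^+$ uniformly for $x$ in a compact, by MB on each factor (Proposition \ref{condition-L-M} and its classical Archimedean counterpart). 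Once LB and MB are in hand, Dynkin's theorem produces the space-time homogeneous Markov process with the claimed boundedness and path regularity.
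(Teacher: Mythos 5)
Your proposal is correct and follows essentially the same route as the paper: identify $T_\A(t)$ with $S_\A(t)$ via the Fourier transform on the dense subspace $L^1(\A)\cap L^2(\A)$, then derive the Markov process from the factorization $Z_\A(x,t)=Z_{\Af}(x_f,t)\otimes Z_\R(x_\infty,t)$. The only difference is that where you verify Conditions LB and MB for the product process by hand (correctly, via the Fubini bound for LB and the union bound for MB), the paper simply cites the general fact that these conditions are preserved under products of Markov processes \cite[Section 4.9]{Zun}.
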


\begin{proof} Similar to Section \ref{ParabolicEquationsonAf}, for 
$f \in L^1(\A) \cap L^2(\A)$, since the adelic heat kernel is absolute integral, the convolution $Z_{\A}(x,t)\ast f(x)$ is in $L^1(\A) \cap L^2(\A)$ and 
$$
\F_{x\to \xi }(Z_{\A}(x,t)\ast f(x))= \hat{f}(\xi)\exp(-t\norm{\xi_f}^\alpha+\abs{\xi_\infty}^\beta).
$$
Therefore $T_\A(t)=S_{\A}(t)$ coincides on a dense set of $L^2(\A)$. The properties of the Markov process follow because, the product of two Markov process which satisfy  conditions MB and LB also satisfies those conditions (see \cite[Section 4.9]{Zun}). 
\end{proof}

\begin{remark}A  slightly different proof of the Theorem above can be given as follows.
 Notice that the expression $\widetilde{S}_{\A}(t)=S_{\Af}(t)\otimes S_{\R}(t)$ gives a strongly continuous contraction  semigroup on $L^2(\A)$ which satisfies $
\norm{ \widetilde{S}_{\A}(t)} = \norm{S_{\Af}(t)}\norm{ S_{\mathbb{R}}(t)}
$. By the Leibniz rule, the infinitesimal generator of $\widetilde{S}_{\A}(t)$ is  $\Dab$. 
\end{remark}

\section*{Acknowledgements}

The  authors would like to thank Manuel Cruz-L\'opez, Sergii Torba and Wilson A. Zu\~niga--Galindo
for very useful discussions. Work was partially supported by CONACYT--FORDECYT  grant number 265667.

\end{document}